\newcommand{\Z}{\mathbb Z}
\newcommand{\MM}{\mathcal{M}}
\newcommand{\DD}{\mathcal{D}}
\newcommand{\KK}{\mathcal{K}}
\newcommand{\OO}{\mathcal{O}}
\newcommand{\R}{\mathbb R}
\newcommand{\C}{\mathbb C}
\newcommand{\Map}{\mathit{Map}}
\newcommand{\Hom}{\mathit{Hom}}
\theoremstyle{plain}
\newtheorem{theorem}{Theorem}[section]
\newtheorem{thm}[theorem]{Theorem}
\newtheorem{lemma}[theorem]{Lemma}
\newtheorem{cor}[theorem]{Corollary}
\newtheorem{prop}[theorem]{Proposition}
\theoremstyle{definition}
\newtheorem{defn}[theorem]{Definition}
\newtheorem{exam}[theorem]{Example}
\author{Samik Basu and Surojit Ghosh}
\email{samik.basu2@gmail.com; samik@rkmvu.ac.in  }
\address{Department of Mathematics,
Vivekananda University,
Belur, Howrah - 711202, West Bengal, India.}
\email{surojitghosh89@gmail.com}
\address{Department of Mathematics,
Vivekananda University,
Belur, Howrah - 711202, West Bengal, India.} 
 \keywords {Tverberg's theorem, Equivariant obstruction theory.} 
 \subjclass [2010] {Primary : {55P91; Secondary : 55S91, 52A35}}
\begin{document}

\title{ Equivariant maps related to the topological Tverberg conjecture}
\maketitle

\begin{abstract}
Using equivariant obstruction theory we construct equivariant maps from certain universal spaces to representation spheres for cyclic groups, products of elementary Abelian groups and dihedral groups. Restricting them to finite skeleta constructs equivariant maps between spaces which are related to the topological Tverberg conjecture. This answers negatively a question of \"Ozaydin posed in relation to weaker versions of the same conjecture. Further, it also has consequences for Borsuk-Ulam properties of representations of cyclic and dihedral groups. 
\end{abstract}

\section{Introduction}
This paper deals with the construction of equivariant maps from certain infinite dimensional complexes into representation spheres. These maps bear a curious connection to equivariant maps related to the Topological Tverberg conjecture (\cite{B-B79, B-S-S}). The conjecture states that for every map $f$ from the simplex $\Delta^{(d+1)(n-1)}$ to $\R^d$, there are $n$ disjoint faces whose images have a common intersection point. This turned out to be a significant unsolved problem in combinatorics having a positive answer for prime powers $n$ (\cite{B-S-S},\cite{Oz87}). Very recently counter-examples have been discovered when $n$ is not a prime power in most cases in \cite{fr15} (more precisely, when $d\geq 3n+1$ and $n$ is not a prime power). The lowest counter-examples up to now have been constructed in \cite{A-M-S-W15}(when $d\ge 2n$ and $n$ is not a prime power).

 The proof of the Topological Tverberg theorem in the prime power case involves a reduction to a question in equivariant homotopy theory. A map violating the topological Tverberg conjecture leads to a map from the $n$-fold deleted join of $\Delta^N$ ($N=(d+1)(n-1)$) to the complement of a diagonal subspace in the $n$-fold join of $\R^d$. Moreover, this map commutes with the action of $\Sigma_n$, the permutation group on $n$ elements. After making certain identifications one obtains a $\Sigma_n$-map from the $N+1$-fold join 
$\{1,\cdots,n\}^{\ast N+1}$  to $S(W^d)$  where $W$ denotes the standard representation of $\Sigma_n$ and the notation $S(W^d)$ denotes the sphere in $W^d$ (This reduction is clearly explained in \cite{mat08}). Non-existence of such equivariant maps imply the topological Tverberg conjecture. The case $n=2$ of the above problem reduces to the question of $C_2$-equivariant maps from $S^{N+1}\to S^N$ which do not exist by the Borsuk-Ulam theorem. 


The first proof of the topological Tverberg conjecture for the prime power case appeared in \"Ozaydin (\cite{Oz87}). He did this by proving the restriction of the above equivariant question to $(\Z/p)^r \subset \Sigma_{p^r}$ does not have a solution. In \cite{sar}, Sarkaria considers Borsuk-Ulam properties of representations inspired by the restriction of the equivariant question to groups $G \subset \Sigma_n$ and also provides a new proof in the prime power case. These may be viewed as generalizations of the Borsuk-Ulam theorem.

In \cite{Oz87}, \"Ozaydin further observes that when $n$ is not prime, the $C_n$-equivariant maps  $\{1,\cdots,n\}^{\ast N+1} \to S(\bar{\rho}^d)$  do exist for $N= (d+1)(n-1)$ (Observe that the standard representation $W$ of $\Sigma_n$ restricts to the reduced regular representation $\bar{\rho}$ of $C_n$). Using equivariant obstruction theory he constructs maps from free $(d+1)$-dimensional $G$-complexes to $(d-1)$-connected ones under some hypothesis. The statement for the symmetric group is used by Frick to construct the counterexamples in \cite{fr15} building on the work of Mabillard and Wagner (\cite{M-W15}). With respect to the counter-examples, a natural question to ask will be : Does a weaker version of the Tverberg conjecture hold when $n$ is not a prime power? More specifically, by weaker version of Tverberg conjecture, we mean choosing a greater value of $N$--\\
{\it  For a map $f : \Delta^{N} \to \R^d$ (with $N > (d+1)(n-1)$), are there $n$ disjoint faces whose images have a common intersection point? }\\
One readily notes that the equivariant obstruction theory arguments are dimension-sensitive and leaves open the possibility of proving weak Tverberg theorems as above. Recently, certain upper and lower bounds on $N$ have been found in \cite{B-F-Z15}. As in the Tverberg case, a map violating the above hypothesis induces a $\Sigma_n$-equivariant map $\{1,\cdots, n\}^{\ast N+1} \to S(W^d)$. This leads to the following question-- \\
{\it For a subgroup $G\subset \Sigma_n$, does there exist a $G$-equivariant map $\{1,\cdots, n\}^{\ast N+1} \to S(W^d)$ with $N > (d+1)(n-1)$?}\\
\"Ozaydin introduces this question and particularly instructs the reader to try for obstructions to such maps using equivariant $K$-theory especially for the cyclic group case (\cite{Oz87}-- paragraph following Proposition 5.1). In this paper, we consider the above question for $G=C_n$ the cyclic group, $G=$ a product of elementary Abelian groups, and $G=D_n$ the dihedral group (for $n$ odd). We prove that such equivariant maps exist in this case. 

For the $C_n$ case, the left hand space is a free $C_n$-space and so it sits inside the universal space $EC_n$. Thus, one is led to the question of existence of $C_n$-equivariant maps from  $EC_n$ to $S(\bar{\rho}^d)$. Maps out of  $EG$ to a $G$-space $X$ correspond to homotopy fixed points of $X$, and their existence can be studied using the Sullivan conjecture (\cite{May},\cite{Mill}). If $G$ is a $p$-group and $X$ is a finite $G$-space without fixed points, then Sullivan's conjecture states that after $p$-completion, $X^G$ is equivalent to $X^{hG}$. Therefore, if the fixed points are empty so are the maps out of $EG$. However, if $G$ is not a $p$-group there do exist finite complexes $X$ with $X^G=\varnothing$ which support equivariant maps $EG\to X$ (\cite{Dror}). For $G=C_n$ with $n\neq p^k$ for any $p$ or $k$,  one may proceed by equivariant obstruction theory to show that a $C_n$-space $X$ with $X^{C_n}=\varnothing$ maps to $S(\bar{\rho}^d)$ for $d \gg 0$. 

In fact, one may do a lot better. We follow a construction in \cite{Dror} to deduce that there are equivariant maps $EC_n \to S(\bar{\rho}^d)$ for any $d$ when $n$ is not a prime power (cf. Corollary \ref{tver}) resolving the question in the cyclic group case. In the case of the dihedral group $D_n \subset \Sigma_n$, we work out a more delicate obstruction theory computation to make the same conclusion. This rules out proofs of weaker versions of Tverberg's theorems using the groups $C_n$ and $D_n$ (cf. Corollary \ref{tver2}). 

The techniques of the paper also have applications to Borsuk-Ulam-like properties of the representations of $C_n$ and $D_n$. There are many definitions of Borsuk-Ulam properties in the literature. We consider Borsuk-Ulam properties for specific representations as considered in \cite{sar} and \cite{iz-ma}. The first refers to constructing equivariant maps out of skeleta of $EG$ and the second to constructing maps out of representation spheres of high dimension. These turn out to be analogous when $G$ is a cyclic group. 

We define that a fixed point free representation $V$ has the anti-Borsuk-Ulam property if there are maps from representation spheres of arbitrarily large dimension to $S(V)$. Our techniques imply that if $n$ is square free, there is a dichotomy among fixed point free $C_n$-representations $V$ -- either $V$ has the Borsuk-Ulam property, or $V$ has the anti-Borsuk-Ulam property (cf. Theorem \ref{borcycsqfree}). When $n$ is not square free, Sullivan's conjecture restricts the existence of equivariant maps from $EC_n$ to $S(V)$ if $S(V)^{C_{p^k}}=\varnothing$ for $p^k \mid n$. We observe that if this condition is avoided by $V$, it satisfies the anti-Borsuk-Ulam property (cf. Theorem \ref{borcyc}). Analogous results are also obtained for the group $D_n$ where $n$ is odd and not a prime power (cf. Theorem \ref{bordnsqfree}, \ref{bordn}). 

This paper is organised as follows. In section 2, we introduce some preliminaries on equivariant obstruction theory. In section 3, we discuss the results on the existence of maps out of universal spaces. In section 4, we dicuss some applications of the results of section 3 specially in the context of Borsuk-Ulam theorems. 

\mbox{ }\\

\noindent

\section{Prelimaniries}

In this section we recall some basic definitions, notations and results in equivariant obstruction theory. Most of these ideas are from  \cite{May} and \cite{bredon}. In equivariant homotopy theory, one uses orthogonal representations to construct $G$-spheres and disks.
\begin{defn}For an orthogonal representation V, the unit disc in the
representation is defined as the space  $$D(V)= \{ v \in V :\parallel v \parallel \leq 1\}$$ and the unit sphere is $$S(V)= \{ v \in V :\parallel v \parallel = 1\}.$$ The spaces $D(V )$ and $S(V )$ have an induced $G$-action and there is an equivariant homeomorphism $$D(V)/S(V) \cong S^V$$ where $S^V$ is the one point compactification of $V$.
\end{defn}

The equivariant homotopy category is defined using weak equivalences on fixed points.
\begin{defn} 

A $G$-map $f:X\rightarrow Y$ between pointed $G$-spaces is called $G$-weak equivalence if $f^H : X^H \rightarrow Y^H$ is a weak equivalence for all subgroups $H \leq G.$ For $n \in \mathbb{N}$  and $H \leq G$,
define $$\pi_n ^H(X) = \left[ G/H_+ \wedge S^n , X \right]^G_{\ast} \cong \pi_n(X^H). $$
\end{defn}

This definition is accompanied by the concept of equivariant CW-complexes which are filtered $G$-spaces obtained by attaching $G$-cells of increasing dimension. Note that a $G$-action on a disjoint union of points is classified upto $G$-equivalence by the decomposition into orbits, and therefore, the $0$-cells are a disjoint union of copies of $G/H$ for conjugacy classes of subgroups $H$. Equivariant $n$-cells are the $G$-spaces $G/H\times D^n$(with trivial action on $D^n$) where $H \subseteq G$ runs over conjugacy classes of subgroups. The boundary of such a cell is $G/H \times S^{n-1}.$ One defines
\begin{defn} A relative $G$-CW complex is a pair of $G$-spaces $(X,A)$  together with a filtration $\{X^{(n)} \}$ of $X$  such that \\
(a) $A \subset X^{(0)}$.\\
(b) $X^{(n+1)}$ is obtained by $X^{(n)}$ as a pushout
$$\xymatrix{ \amalg_{j \in J}G/H_j \times S^n  \ar[d] \ar[r] &X^{(n)} \ar[d]\\ \amalg_{j \in J}G/H_j \times D^{n+1} \ar[r] & X^{(n+1)}}$$\\
(c)  $ X = \cup_{n \in \mathbb{Z}}X^{(n)}$ has the colimit topology.

\end{defn}

One may note that the disks $D(V)$, and the spheres $S^V$ are examples of $G$-CW-complexes. There is an equivariant Whitehead theorem (Theorem 3.4 of  \cite{wan80}) which states that a weak equivalence between $G$-CW-complexes is a $G$-homotopy equivalence. 

 In this paper, we are interested in the construction of equivariant maps using obstruction theory. For equivariant maps, the obstruction theory can be carried out as usual and the result produces obstruction classes in Bredon cohomology (with local coefficients). We briefly recall the theory here.

Let $\OO_G$ denote the orbit category of $G$ whose objects are orbit spaces $G/H$ and morphisms are equivariant maps.  

\begin{defn} A coefficient system is a contravariant functor from $\OO_G$ to  Abelian groups. 
\end{defn}

An example of a coefficient system is the assignment $\underline{\pi_n(X)}(G/H)= \pi_n(X^H)\cong [G/H_+ \wedge S^n,X]_{\ast}^G$ associated to any $G$-space $X$. Taking singular chains is another example. Denote
by $\underline{C}_n(X)$ the coefficient system 
$$\underline{C}_n(X)(G/H) = C_n(X^H;\mathbb{Z})$$
the singular $n$-chains of the space $X^H.$ The boundary map $\partial_n: \underline{C}_n(X) \rightarrow \underline{C}_{n-1}(X)$ is the natural transformation induced by the boundary maps of the singular chain complex $\partial_n(G/H): C_n(X^H; \mathbb{Z}) \rightarrow C_{n-1}(X^H; \mathbb{Z})$. 

Let $\MM$ be a coefficient system. Define the cochain complex of $X$ with respect $\MM$ as 
$$C^n_G(X;\MM) = \Hom _{\mathcal{C}}(\underline{C}_n(X);\MM)$$
with coboundary $\delta :C^n_G(X;\MM) \to C^{n+1}_G(X;\MM)$ defined as $\Hom_{\mathcal{C}}(\partial,id).$ 

\begin{defn}
The Bredon cohomology of $X$ with coefficients in $\MM$ is defined as the cohomology of the cochain complex $C^*_G (X;\MM )$. It is denoted $H^*_G (X;\MM )$.

\end{defn} 

The obstruction theory for extending maps from  CW-complexes can be formulated using Bredon cohomology in a similar way as the non-equivariant situation. Let $X$ be a $G$-CW complex and  let $Y$ be a $G$-space such that $Y^H$ is non-empty, path connected and simple (that is, the fundamental group acts trivially on the higher homotopy groups) for every subgroup $H$ of $G$. Then, given an equivariant map $ \phi : X^{(n)} \rightarrow Y,$ one may define an equivariant obstruction cocycle $c_{G}(\phi) \in C_{G}^{n+1}(X; \underline{\pi_n(Y)})$ leading to the following Proposition. (see Theorem 1.5 of Chapter II in \cite{bredon}.)

\begin{prop}
Let  $\phi : X^{(n)} \rightarrow Y$ be a $G$-map. Then, the restriction of $\phi$ to $X^{(n-1)}$ can be extended to $X^{(n+1)} $ if and only if $[c_{G}(\phi)]\in H^{n+1}_G(X;\underline{\pi_n(Y)})$ is $ 0.$
\end{prop}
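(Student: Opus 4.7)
The plan is to mimic the classical (non-equivariant) obstruction-theoretic argument of Eilenberg, cell-by-cell, but with the cells now being equivariant cells of the form $G/H \times D^{n+1}$. The simplicity hypothesis on each $Y^H$ is what allows one to speak unambiguously of free homotopy classes of maps $S^n \to Y^H$, and hence to identify them with $\pi_n(Y^H) = \underline{\pi_n(Y)}(G/H)$.

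First I would define $c_G(\phi)$ precisely. For each equivariant $(n+1)$-cell $G/H \times D^{n+1}$ of $X$, with attaching map $\alpha : G/H \times S^n \to X^{(n)}$, the composite $\phi \circ \alpha$ is adjoint to a pointed map $S^n \to Y^H$, whose homotopy class lies in $\underline{\pi_n(Y)}(G/H)$. Assembling these over all $(n+1)$-cells yields an element $c_G(\phi) \in C^{n+1}_G(X;\underline{\pi_n(Y)})$. The fact that $c_G(\phi)$ is a cocycle follows by pulling back to each $H$-fixed subcomplex $X^H$, where it reduces to the usual assertion that the non-equivariant obstruction cochain of $\phi^H : (X^H)^{(n)} \to Y^H$ is a cocycle.

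For the \emph{only if} direction, suppose $\phi|_{X^{(n-1)}}$ extends to $\Phi : X^{(n+1)} \to Y$. Then $\phi$ and $\Phi|_{X^{(n)}}$ agree on $X^{(n-1)}$, so on each equivariant $n$-cell $G/H \times D^n$ they give a pair of maps that coincide on the boundary; these assemble into an equivariant difference cochain $d(\phi, \Phi|_{X^{(n)}}) \in C^n_G(X;\underline{\pi_n(Y)})$, defined cell by cell via the pinch map $D^n / S^{n-1} \cong S^n$ and the induced class in $\pi_n(Y^H)$. The standard coboundary computation, done on each fixed-point space, gives
\[
\delta\, d(\phi, \Phi|_{X^{(n)}}) = c_G(\phi) - c_G(\Phi|_{X^{(n)}}) = c_G(\phi),
\]
since $c_G(\Phi|_{X^{(n)}}) = 0$ (every $(n+1)$-cell is already filled in by $\Phi$). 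Hence $[c_G(\phi)] = 0$.

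For the \emph{if} direction, suppose $c_G(\phi) = \delta \beta$ for some $\beta \in C^n_G(X;\underline{\pi_n(Y)})$. Using the simplicity of each $Y^H$, I would modify $\phi$ one equivariant $n$-cell at a time, rel $X^{(n-1)}$, by the prescribed element $\beta$: on a cell $G/H \times D^n$ one alters $\phi$ by pre-composing with a pinch map and inserting a representative of $\beta$ at the coordinate $G/H$, then extends $G$-equivariantly. This yields a new equivariant map $\phi' : X^{(n)} \to Y$ with $\phi'|_{X^{(n-1)}} = \phi|_{X^{(n-1)}}$ and with difference cochain $d(\phi, \phi') = \beta$, so that $c_G(\phi') = c_G(\phi) - \delta\beta = 0$. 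Vanishing of $c_G(\phi')$ means that for every $(n+1)$-cell $G/H \times D^{n+1}$ the composite $\phi' \circ \alpha : G/H \times S^n \to Y$ is equivariantly null-homotopic, and a choice of null-homotopies lets one extend $\phi'$ over all $(n+1)$-cells to obtain the required map $X^{(n+1)} \to Y$.

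The main technical obstacle is bookkeeping the equivariant difference cochain so that the identity $\delta d(\phi,\Phi|_{X^{(n)}}) = c_G(\phi) - c_G(\Phi|_{X^{(n)}})$ makes sense natively in Bredon cohomology rather than merely after restricting to each $Y^H$; the simplicity hypothesis is used precisely to ensure that the homotopy classes of the attaching-map compositions are well-defined elements of $\pi_n(Y^H)$ independent of basepoint, so that the functor $\underline{\pi_n(Y)}$ on $\mathcal{O}_G$ behaves properly under the change-of-cell maps.
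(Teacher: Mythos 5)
Your proposal is correct and reconstructs, equivariant cell by equivariant cell, the standard Eilenberg-style obstruction argument (definition of the obstruction cocycle, cocycle condition by restriction to fixed-point subcomplexes, the difference cochain and its coboundary formula, and the cell-wise modification realizing a prescribed cochain), which is precisely the content of the reference the paper cites for this proposition (Theorem 1.5 of Chapter II in Bredon) rather than proving it in-text. One cosmetic slip: the difference cochain on a cell $G/H\times D^n$ arises from gluing the two maps along $S^{n-1}$ to get a map out of $D^n \cup_{S^{n-1}} D^n \cong S^n$, not from the pinch quotient $D^n/S^{n-1}$ (the pinch map is what you correctly use later to realize $\beta$), but this does not affect the validity of the argument.
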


For our purposes we need to lift maps from the base to the total space of a $G$-fibration. This theory may be read off from \cite{Mol90} or \cite{Muk94}. Consider a lifting diagram 
$$\xymatrix{X\ar[r]^f  &  Y \\
                                 &  Z \ar[u]^u \ar@{-->}[lu] }$$
Assume that the equivariant homotopy fibre of $f$, $F(f)$ satisfies that for every subgroup $H\subset G$, $F(f)^H$ is a simple space. Then, the homotopy groups $\underline{\pi_nF(f)}$ induces an equivariant local coefficient system on the base $Y$ and by pullback $u^*  \underline{\pi_nF(f)}$ on $Z$. 

Filter $Z$ by skeleta as $Z=\cup_n Z^{(n)}$. Suppose we have a lift $\phi: Z^{(n)}\to X$ for $n\geq 1$. Then one may define obstruction classes $[c_u(\phi)]\in H^{n+1}_G(Z;  u^*  \underline{\pi_nF(f)})$ so that  
\begin{prop}\label{eqobs}
The restriction of the lift $\phi$ to $Z^{(n-1)}$ can be extended to $Z^{(n+1)}$ if and only if $[c_u(\phi)]\in H^{n+1}_G(Z;u^*\underline{\pi_nF(f)})$ is $ 0.$
\end{prop}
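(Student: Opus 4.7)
The strategy is to reduce the lifting problem to a cell-by-cell non-equivariant obstruction problem on fixed-point spaces, then package the resulting obstructions into a Bredon cochain valued in the pulled-back local system $u^*\underline{\pi_n F(f)}$.

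First, I would analyze the extension problem one equivariant cell at a time. Since $Z$ is a relative $G$-CW complex, an $(n+1)$-cell has the form $G/H \times D^{n+1}$, attached along an equivariant map $\chi \colon G/H \times S^n \to Z^{(n)}$. By $G$-equivariance and the free $G/H$-factor, solving the lifting problem over this cell is equivalent to solving it on $H$-fixed points: given $\phi^H \circ \chi^H \colon S^n \to X^H$ and the canonical extension $D^{n+1} \to Y^H$ of $f^H \circ \phi^H \circ \chi^H$ (coming from the cell structure on $Z$ composed with $u$), find a lift $D^{n+1} \to X^H$ against $f^H$ extending $\phi^H \circ \chi^H$.

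Second, I would identify the obstruction to each of these cell-wise lifting problems. Applying classical fibration obstruction theory to $f^H \colon X^H \to Y^H$ (which is again a fibration since $f$ is a $G$-fibration, and satisfies $F(f)^H \simeq F(f^H)$), the obstruction to the cell-wise lift lies in $\pi_n$ of the homotopy fiber of $f^H$ over the image of the center of $D^{n+1}$. The hypothesis that $F(f)^H$ is simple lets one transport canonically along paths in $Y^H$, so this obstruction lives in $\underline{\pi_n F(f)}(G/H)$ evaluated via $u \circ \chi$ on the cell, that is, in the value of $u^*\underline{\pi_n F(f)}$ at the cell. Collecting over all cells gives a cochain
\[
c_u(\phi) \in C^{n+1}_G(Z;\, u^*\underline{\pi_n F(f)}).
\]

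Third, I would verify that $\delta c_u(\phi) = 0$, so that $[c_u(\phi)]$ defines a class in $H^{n+1}_G(Z; u^*\underline{\pi_n F(f)})$. This is a naturality argument formally identical to the non-equivariant case (Theorem 1.5 of Chapter II of \cite{bredon}), applied fixed-point-wise and reassembled. Finally, for the main equivalence: if the lift extends to $Z^{(n+1)}$ after restriction to $Z^{(n-1)}$, then the cell-wise obstructions for the new choice of extension on $n$-cells all vanish and differ from $c_u(\phi)$ by a coboundary $\delta d$, where $d$ measures the homotopy class of difference on each $n$-cell in $\underline{\pi_n F(f)}$. Conversely, if $c_u(\phi) = \delta d$, one uses $d$ as a prescription to modify $\phi$ on $n$-cells relative to $Z^{(n-1)}$, replacing it by $\phi'$ whose cell-wise obstructions vanish, so that $\phi'$ extends across every $(n+1)$-cell.

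The main technical obstacle I expect is the careful bookkeeping of basepoints and the identification of the local coefficient system $u^*\underline{\pi_n F(f)}$. Different attaching maps land at different points of $Y^H$, so the homotopy fibers must be canonically identified via paths; this is precisely what the simplicity hypothesis on $F(f)^H$ buys, and once this identification is set up, the rest of the argument follows the classical pattern of obstruction theory carried out on each fixed-point space and reassembled into Bredon cochains via the orbit category.
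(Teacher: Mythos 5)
The paper offers no proof of this proposition; it cites \cite{Mol90} and \cite{Muk94} and states the result as known. Your proof is a correct reconstruction of the standard equivariant lifting obstruction theory: the reduction $\Map^G(G/H\times D^{n+1},X)\cong\Map(D^{n+1},X^H)$ cell-by-cell, the identification of the cell-wise obstruction in $\pi_n$ of the homotopy fibre via the simplicity hypothesis, assembly into a Bredon cochain, the cocycle condition, and the two directions of the equivalence by modifying $\phi$ on $n$-cells rel $Z^{(n-1)}$. Two small points to polish: (i) the inclusions $\Map^H(EG,W)_v\hookrightarrow\Map(EG,W)_v$ that arise in the paper's applications need not literally be fibrations, so you should phrase everything throughout in terms of the \emph{equivariant homotopy fibre} $F(f)$ and use the natural equivalence $F(f)^H\simeq F(f^H)$, rather than appealing to "$f^H$ is again a fibration"; (ii) strictly, the identification of the coefficient at a cell $G/H\times D^{n+1}$ with the value of $u^*\underline{\pi_n F(f)}$ uses not only simplicity but also that the pullback local system is defined along $u\circ(\text{characteristic map})$, so the transport along paths in $Y^H$ should be done relative to the chosen basepoint $u(\text{center})$ exactly as you indicate. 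With those adjustments your argument matches what one would read off from the cited sources.
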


Our interest throughout the document is the case : 
$$\begin{cases} G=\Gamma/H\\
Y=\Map(EG,W)_v\\
X= \Map^H(EG,W)_v\\
f:X\to Y=\text{the standard inclusion}
\end{cases}$$
In the above, the subscript $v$ refers to the connected component of $v$,  $W$ is usually a $\Gamma$-representation sphere (hence, a simple space) on which $H$ acts via maps of degree $1$ and $Z$ is some universal space for a family of subgroups, and $u$ is some constant map. For these cases we note the following : 
\begin{prop}\label{fibhtpy}
The fibre $F(f)$ is path-connected and has simple fixed points for every subgroup $K$ of $G$. 
\end{prop}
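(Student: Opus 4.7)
The plan is to identify $F(f)^K$ with a component of a mapping space into the non-equivariant homotopy fibre $F(\iota)$ of $\iota \colon W^H \hookrightarrow W$, and then to conclude using the simpleness of mapping spaces into simple targets.

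First I would observe that because $\Gamma$ acts on $EG$ through the quotient $\Gamma \to G = \Gamma/H$, the subgroup $H$ acts trivially on $EG$; hence an $H$-equivariant map $EG \to W$ is exactly a map with image in $W^H$, so $X = \Map(EG, W^H)_v$ and $f$ is induced by post-composition with $\iota$. The $G$-action on $X$ and $Y$ arises through the free $G$-action on $EG$, and since $EG$ restricted to $K$ is a free contractible $K$-CW-complex, the orbit space $EG/K$ is a model for $BK$. It follows that $X^K \simeq \Map(BK, W^H)$ and $Y^K \simeq \Map(BK, W)$, with $f^K$ induced by $\iota$. The functor $\Map(BK, -)$ preserves homotopy fibre sequences, yielding
\[ F(f)^K \;\simeq\; \Map(BK, F(\iota))_{v'}, \]
the path-component of the constant map $v'$ to the basepoint of $F(\iota)$.

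Next I would check that $F(\iota)$ itself is path-connected and simple. Writing $V = V^H \oplus V'$ orthogonally, $\iota$ becomes the natural inclusion $S^{V^H} \hookrightarrow S^V$; both are path-connected spheres of positive dimension, so $F(\iota)$ is path-connected by the long exact sequence of the fibration. It is simply-connected whenever $\dim V^H \geq 2$, and the sporadic cases $\dim V^H \in \{0,1\}$ can be verified by direct inspection of the fibre. Granted simpleness of $F(\iota)$, the standard fact that $\Map(A, Z)_v$ is simple whenever $Z$ is simple --- the $\pi_1$-action on $\pi_n$ of the mapping space factors through the pointwise $\pi_1$-action on $\pi_n$ of $Z$, which vanishes --- implies $F(f)^K$ is simple; choosing the component of $v'$ automatically gives path-connectedness. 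Specialising to $K = \{e\}$ recovers path-connectedness of $F(f)$.

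The main obstacle is verifying simpleness of $F(\iota)$ in the edge cases where it may fail to be simply-connected, but in each such case $\iota$ is explicit enough to analyse $F(\iota)$ directly. The remaining ingredients --- identification of fixed-point spaces with mapping spaces from $BK$, preservation of homotopy fibre sequences under $\Map(BK, -)$, and simpleness of mapping spaces into simple targets --- are all standard.
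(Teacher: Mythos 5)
Your proof hinges on the claim that $H$ acts trivially on the source space, so that $X = \Map^H(EG,W)$ collapses to the subspace of maps landing in $W^H$, and $F(f)$ becomes a mapping space into the non-equivariant fibre of $W^H \hookrightarrow W$. This reading does not match how the setup is actually used in the paper and cannot be right: in Proposition \ref{eqEG} the relevant map is $\Map^{C_{p^k}}(EC_n,W) \to \Map(EC_n,W)$ with $\Gamma = C_n$, $H = C_{p^k}$, and the source is $EC_n$ (a \emph{free} contractible $C_n$-space, not $E(C_n/C_{p^k})$); likewise Propositions \ref{mainln} and \ref{eqEG2} use $EL_n$ and $E_HD_n$, on which the relevant normal subgroup acts freely. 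The general box with ``$EG$'' is an abuse of notation for the free contractible $\Gamma$-space, and $\Map^H(E\Gamma,W)$ is then a model for the homotopy fixed points $W^{hH}$, not for $\Map(E\Gamma,W^H)$.

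This matters because your interpretation would make Lemma \ref{fibm} false: under your reading, the fibre $F(f)$ would be weakly equivalent to a component of $\Map(E\Gamma, \operatorname{hofib}(W^H\hookrightarrow W))$, whose homotopy groups agree with those of $\operatorname{hofib}(W^H\hookrightarrow W)$ and have no reason to be $m$-profinite. The genuine content --- that the fibre of $\Map^{C_m}(E,W) \to \Map(E,W)$ is connected with $m$-profinite (hence abelian) homotopy groups --- comes from a Federer/Bousfield--Kan type spectral sequence with $E_2$-term $H^s(C_m;\pi_t W)$, which is exactly what the proof of Lemma 3.3 in \cite{Dror} supplies and what the paper cites for this proposition. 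Your argument never touches that mechanism. A secondary problem is the identification $X^K \simeq \Map(BK,W^H)$: even under your reading, $\Map^K(EG,W^H)$ is the section space of the bundle $EG\times_K W^H \to BK$, and $K$ does not act trivially on $W^H$ in the cases of interest (e.g. $C_m$ acts nontrivially on $S(\xi^{p^k})$), so the untwisted mapping space $\Map(BK,W^H)$ is the wrong object. To fix the proof you would need to replace the reduction to $\operatorname{hofib}(\iota)$ by the tower-of-fibrations argument of \cite{Dror}, applied to each $f^K : \Map^{HK}(E\Gamma,W) \to \Map^K(E\Gamma,W)$, from which both path-connectedness and simpleness (via abelian, in fact profinite, homotopy groups with trivial $\pi_1$-action) follow.
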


\begin{proof}
This follows from the proof of Lemma 3.3 of \cite{Dror}.
\end{proof}

\begin{prop}
The local coefficients $u^*  \underline{\pi_nF(f)}$ reduces to the coefficient system $\underline{\pi_nF(f)}$. 
\end{prop}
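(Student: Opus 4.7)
The plan is to exploit the hypothesis that $u$ is a constant map, since pullback of any equivariant local coefficient system along a constant map produces an untwisted system determined by the value at a single basepoint.

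I would begin by recalling how equivariant local coefficients enter obstruction theory: on a $G$-space $Y$ whose fixed-point sets are path-connected and simple, an equivariant local coefficient system is described by a functor on the orbit category $\OO_G$ whose value on $G/K$ is an abelian group assigned to a basepoint of $Y^K$, with simplicity guaranteeing this assignment is canonically independent of the basepoint choice. Pulling back along a $G$-map $u:Z\to Y$ assigns to an orbit $G/K\to Z$ the value at the composite orbit $G/K\to Y$, and monodromy along a $K$-fixed path in $Z^K$ is transported along its $u$-image in $Y^K$.

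Applying this to the situation at hand, since $u$ is constant with value $v\in Y$, every orbit $G/K\to Z$ maps to the constant orbit $G/K\to\{v\}\hookrightarrow Y$, and every $K$-fixed path in $Z$ is sent to the constant path at $v$. Hence the pulled-back system $u^*\underline{\pi_nF(f)}$ assigns to each orbit $G/K$ the single abelian group $\pi_n(F(f)^K;v)$ with trivial action by paths; functoriality in orbit-category morphisms $G/K\to G/K'$ is inherited directly from that of $\underline{\pi_nF(f)}$ evaluated at $v$. This is precisely the data of the coefficient system $\underline{\pi_nF(f)}$ on $\OO_G$.

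The only nontrivial step is verifying that the ambient local system $\underline{\pi_nF(f)}$ on $Y$ was well-defined as an equivariant local coefficient system to begin with, so that the pullback formalism applies; but this is exactly what Proposition \ref{fibhtpy} supplies, since path-connectedness identifies the homotopy groups at varying basepoints and simplicity of each $F(f)^K$ removes the $\pi_1$-ambiguity. Once this is in place, the reduction is essentially a formal consequence of $u$ being constant, and I do not expect any substantive obstacle beyond unwinding the definitions.
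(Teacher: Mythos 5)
Your argument is correct and is exactly the first of the two alternative justifications the paper itself gives: pulling back along the constant map $u$ kills all monodromy, so the local system reduces to the ordinary coefficient system determined by the fiber over the constant value. The paper also offers a second, independent reason which you do not mention: in all the cases of interest $Z$ is a universal space (such as $EG$ or $E_H D_n$), whose relevant fixed-point sets are contractible, so $Z$ admits no non-trivial equivariant local coefficient system to begin with and the conclusion holds regardless of $u$. Your version buys a self-contained unwinding of the constant-map case with the role of simplicity and path-connectedness (Proposition \ref{fibhtpy}) made explicit; the paper's second reason is slightly more robust in that it does not depend on $u$ being constant, which could matter if one wanted to vary the reference lift. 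Both are sound, and for the purposes of the paper either suffices.
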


\begin{proof}
This follows either because $u$ is some constant map or because $Z$ is some universal space, hence, there does not exist any non-trivial coefficient system on $Z$. 
\end{proof}

\section{Construction of equivariant maps from universal spaces }
This section deals with the construction of equivariant maps from universal spaces and universal spaces of subgroups which turn up in the topological Tverberg theorem. Recall that a map $\Delta^N \to \R^d$ which violates the Tverberg's condition induces a $\Sigma_n$-equivariant map 
\begin{equation}\label{Tv}
\{1,\cdots,n\}^{\ast N+1} \to S(W^d)
\end{equation}
where $W$ denotes the standard representation of $\Sigma_n$. We consider the subgroups $G=C_n$ and $G=D_n$ (for $n$ odd) of $\Sigma_n$ and show that there does exist $G$-equivariant maps as above. This is proved by constructing equivariant maps exist out of appropriate universal spaces. Therefore, in order to prove weak versions of Tverberg's theorem along these lines, one needs to consider other subgroups of $\Sigma_n$.

For techniques of constructing equivariant maps from universal spaces we use the ideas of \cite{Dror}. In that paper it was proved that if $G$ is not a $p$-group then there is a fixed point free finite complex $W$ which carries an equivariant map from $EG$ onto it. In the case $G$ is a semi-direct product of a $p$-group and a $q$-group for primes $p\neq q$, there is an explicit construction of $W$. For our purposes we adapt the proof for $G=C_n$ and $G=D_n$ to deduce results about the existence of $G$-maps of the type (\ref{Tv})  in large dimensions. 

We use the following version of Lemma 3.3 of \cite{Dror}. Let $W$ be a simply connected $C_m$-space with $W^{C_m}\neq \varnothing$  and for each $g\in C_m$ the induced map on $W$ is homotopic to the identity. Then,
\begin{lemma}\label{fibm}
For any contractible space $E$ with free $C_m$ action, define $F_i$ to be the homotopy fibre of the map $i : \Map^{C_m}(E, W)_u \rightarrow \Map(E, W)$ (the space $\Map^{C_m}(E, W)_u$ refers to a path component of a $C_m$-equivariant map $u : E \to W$). Then $F_i$ is path connected and its homotopy groups are $m$-profinite groups. (That is, inverse limit of groups with $p$-torsion for $p|m$)
\end{lemma}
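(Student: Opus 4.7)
The plan is to identify $i$ with the canonical comparison map between the homotopy fixed points of $W$ and its underlying space, and then to read off the structure of the fibre from the homotopy fixed point spectral sequence.

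First, I would note that since $E$ is contractible we have $\Map(E,W) \simeq W$, while $\Map^{C_m}(E,W)$ is the space of sections of the Borel fibration $E \times_{C_m} W \to E/C_m$, hence a model for the homotopy fixed point space $W^{hC_m}$. Under these identifications, $i$ becomes the natural comparison map $W^{hC_m} \to W$. For path-connectedness of $F_i$, the relevant segment of the long exact sequence of the fibration reads
$$\pi_1 \Map(E,W) \longrightarrow \pi_0 F_i \longrightarrow \pi_0 \Map^{C_m}(E,W)_u,$$
and since $W$ is simply connected the left-hand group vanishes ($= \pi_1 W = 0$), while the right-hand term is by definition a single point; hence $\pi_0 F_i = 0$.

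For the higher homotopy groups I would invoke the homotopy fixed point spectral sequence
$$E_2^{s,t} = H^s(C_m; \pi_t W) \Longrightarrow \pi_{t-s}(W^{hC_m}).$$
The hypothesis that every $g \in C_m$ acts on $W$ through a self-map homotopic to the identity forces the induced $C_m$-action on each $\pi_t W$ to be trivial, so $E_2^{0,t} = \pi_t W$ and the edge homomorphism recovers the comparison map $\pi_n W^{hC_m} \to \pi_n W$. The long exact sequence of the fibration then presents $\pi_n F_i$ as an extension of the kernel of the edge homomorphism in degree $n$ by the cokernel of the edge homomorphism in degree $n+1$; both are built from iterated subquotients of $H^s(C_m;\pi_* W)$ with $s\geq 1$, each of which is annihilated by $|C_m|=m$. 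Passing to the skeletal filtration $E = \mathrm{colim}\,E^{(k)}$ and taking the resulting inverse limit converts these $m$-torsion pieces into the asserted $m$-profinite description of $\pi_n F_i$.

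The main obstacle I foresee is combining the spectral sequence analysis with the passage to the inverse limit over the skeletal filtration of $E$: one needs a Mittag-Leffler argument to ensure that the limit of the filtered $m$-torsion pieces from each finite stage yields a genuine $m$-profinite group and is not obstructed by extra limit terms. Simple-connectedness of $W$ together with the hypothesis on the action are exactly what guarantee that only subquotients of $H^{\geq 1}(C_m;\pi_*W)$ — and therefore only $m$-torsion pieces — appear beyond the edge of the spectral sequence, which is what makes the assembly go through.
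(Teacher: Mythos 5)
Your proposal is correct and essentially coincides with the paper's proof, which simply cites Lemma 3.3 of Dror Farjoun--Zabrodsky: that argument proceeds by the skeletal filtration of $EC_m$ (equivalently, the homotopy fixed point spectral sequence you invoke) together with the observation that $H^{\geq 1}(C_m;-)$ is $m$-torsion. The Mittag--Leffler issue you flag is harmless here, since $\lim^1$ of a tower of $m$-torsion groups is itself $m$-torsion (being a quotient of a product of such groups), so every piece entering the assembly is $m$-torsion or an inverse limit thereof, i.e.\ $m$-profinite.
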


\begin{proof}
The proof of Lemma 3.3 in \cite{Dror} may be repeated verbatim together with the observation that the group cohomology of $C_m$ has only $m$-torsion in positive degrees. 
\end{proof}

\subsection{Equivariant maps for cyclic groups}\label{tv}

Using Lemma \ref{fibm}, we construct $C_n$-equivariant maps between universal spaces and representation spheres. Recall that the set of real irreducible representations of $C_n$ are listed as 
$$\begin{cases}\{ 1, \xi, \xi^2 , \cdots ,\xi^\frac{n-1}{2} \}, \text{if $n$ is odd.} \\ 
\{ 1,\sigma ,\xi, \xi^2 , \cdots ,\xi^{\frac{n}{2}-1} \}, \text{if $n$ is even.} \end{cases}$$
The notation above can be read off as follows. For a cyclic group $C_n$ of odd order complex irreducible representations are one-dimensional and given by multiplication by roots of unity; the real irreducible representations are obtained by restriction which are two dimensional. Denote by $\xi$ the representation obtained by multiplication by $e^{2\pi i/n}$. Recalling that the restriction of a complex representation satisfies $res(V)\cong res(\bar{V})$, the set of irreducible representations of $C_n$ are 
 $\{ 1, \xi^r, 1\leq r \leq \frac{n-1}{2} \}$.

For the group $C_{2n}$ there is a $1$-dimensional irreducible real representation given by sign. The other representations are given by the $n^{th}$ roots of unity. Thus the irreducible representations are $\{ 1,\sigma ,\xi, \xi^2 , \cdots ,\xi^{n-1} \}$. This justifies the notation above.

Let $G=C_n$ such that $n$ is not a prime power. That is, $n$ is divisible by at least two distinct primes. Write $n = p^k\cdot m $ so that $k\geq 1, m>1$ and $p$ does not divide $m$. Hence $C_n = C_{p^k} \times C_m.$

 Equivariantly embed $C_m $ inside $ S(\xi^{p^k})$  and  $C_n$ inside $S(\xi)$. Define 
$$W = S(\xi^{p^k})\ast (S(\xi)/C_m) \cong S(\xi^{p^k})\ast S(\xi^m) = S(\xi^{p^k}\oplus \xi^m)$$ 
Then $W$ is  a fixed point free $C_n$-space satisfying the conditions of Lemma \ref{fibm}.




\begin{prop}\label{eqEG}

For $n$ is not a prime power, there is a $C_n$-equivariant map from $EC_n$ to $W.$

\end{prop}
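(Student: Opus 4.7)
The plan is to apply the obstruction-theoretic framework of Proposition~\ref{eqobs} in tandem with Lemma~\ref{fibm}. I work in the setup $\Gamma = C_n$, $H = C_m$, $G = \Gamma/H = C_{p^k}$, with $E = EC_n$ regarded as a free contractible $C_m$-space. Set $Y = \Map(EC_n, W)_v$ and $X = \Map^{C_m}(EC_n, W)_v$, both as $G = C_{p^k}$-spaces via the residual action of $C_n/C_m$, and let $f\colon X \hookrightarrow Y$ be the standard inclusion. The first step is the adjunction observation: a $C_n$-equivariant map $EC_n \to W$ amounts to the same data as a $C_{p^k}$-equivariant map $EC_{p^k} \to X$, because its adjoint is a $C_n$-equivariant map $EC_{p^k} \times EC_n \to W$ (with the diagonal $C_{p^k}$-action on the product and the $C_m$-action only on the second factor), and $EC_{p^k} \times EC_n$ with this $C_n$-action is again a free contractible $C_n$-space, hence a model for $EC_n$.

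So it suffices to construct a $C_{p^k}$-equivariant map $Z = EC_{p^k} \to X$, which I do skeletally. On $Z^{(0)} = C_{p^k}$, fix a point $w_1 \in W^{C_m} = S(\xi^m)$, let $x_0 \in X$ be the constant map $EC_n \to w_1$, and send $g \mapsto g \cdot x_0$; this gives the base of the induction together with its underlying map into $Y$. Inductively extend the lift. By Proposition~\ref{eqobs}, the obstruction to extending from $Z^{(n-1)}$ to $Z^{(n+1)}$ is a class in $H^{n+1}_{C_{p^k}}(Z; \underline{\pi_n F(f)})$, where $F(f)$ is the homotopy fibre of $f$.

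By Lemma~\ref{fibm}, $\pi_n F(f)$ is an $m$-profinite group, and since $\gcd(p, m) = 1$, it is uniquely $p$-divisible. Because $Z = EC_{p^k}$ is $C_{p^k}$-free, the Bredon cohomology collapses to ordinary group cohomology $H^{n+1}(C_{p^k}; \pi_n F(f))$ for whatever $C_{p^k}$-module structure is present, and this vanishes in positive degrees because multiplication by $|C_{p^k}| = p^k$ is simultaneously zero on group cohomology and invertible on the coefficients. All obstructions vanish, the lift extends to every skeleton, and unwinding the adjunction produces the desired $C_n$-equivariant map $EC_n \to W$.

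The main obstacle I expect is matching the lifting setup of Proposition~\ref{eqobs} with the construction of a genuine $C_n$-map via the product adjunction, and verifying that the hypotheses of Lemma~\ref{fibm} hold for $W = S(\xi^{p^k} \oplus \xi^m)$ (it is simply connected, $W^{C_m} = S(\xi^m) \neq \varnothing$, and the $C_m$-action on $W$ factors through $SO(4)$, which is connected, so each element acts homotopic to the identity). Once the setup is nailed down, the vanishing of obstructions is a straightforward coprimality argument using $\gcd(p, m) = 1$.
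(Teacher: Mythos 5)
Your argument is correct and uses the same machinery as the paper (Lemma~\ref{fibm} together with the lifting obstruction theory of Proposition~\ref{eqobs}, followed by the exponential correspondence), but with the roles of the two coprime factors of $n$ interchanged. The paper sets $H = C_{p^k}$, takes the initial constant map into $\Map^{C_{p^k}}(EC_n,W)$ at a point of $W^{C_{p^k}}=S(\xi^{p^k})$, and lifts along $EC_m$, so the obstructions land in $H^{*}(C_m;M)$ with $M$ $p$-profinite. You instead set $H=C_m$, start from a point of $W^{C_m}=S(\xi^m)$, and lift along $EC_{p^k}$, so the obstructions land in $H^{*}(C_{p^k};M)$ with $M$ $m$-profinite. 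Both vanish for the same coprimality reason, and since $C_n$ is abelian (so both $C_{p^k}$ and $C_m$ are normal, and each fixed-point set $W^{C_{p^k}}$, $W^{C_m}$ is nonempty), the two directions are entirely symmetric and equally valid. The paper's convention is the one forced in the dihedral analogue (Proposition~\ref{eqEG2}), where only $C_{p^k}$ is normal in $D_n = C_{p^k}\rtimes D_m$, which is presumably why it was chosen. One small point to tighten in your write-up: Proposition~\ref{eqobs} lifts a pre-specified $u\colon Z\to Y$, so you should name $u$ explicitly (e.g.\ the $C_{p^k}$-equivariant constant map to a point of $Y^{C_{p^k}}$, which exists since $W^{C_{p^k}}\neq\varnothing$); your $0$-skeleton map $g\mapsto g\cdot x_0$ is then a lift of $u|_{Z^{(0)}}$ only up to an equivariant homotopy in the connected space $Y$, which is harmless but worth saying. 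The paper's own proof glosses over the same issue, so this is a matter of polish rather than a gap.
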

 \begin{proof}
We follow the proof of Proposition $3.3$ in \cite{Dror}. Take a point $x \in S(\xi^{p^k}) \subset W$ and denote by $x$ the corresponding constant map $EC_n \rightarrow W$. Since $ W ^{C_{p^k}} = S(\xi^{p^k})$ then $x$ is $C_{p^k}$-equivariant and hence $x \in \Map^{C_{p^k}}(EC_n , W).$
Consider the lifting diagram of $C_m$-spaces
$$\xymatrix{\Map^{C_{p^k}}(EC_n , W)\ar[r]  &  \Map(EC_n , W) \\
                                                                            &  EC_m \ar[u]^u \ar@{-->}[lu] }$$
As $W^{C_m} \neq \varnothing , $ there exists a $C_m$-equivariant constant map $ u : EC_m \rightarrow \Map(EC_n , W)$. Then we apply equivariant obstruction theory in Proposition \ref{eqobs}, to observe that the obstructions lie in certain groups of the form $H^k(BC_m; M_i)$ for $k\geq 1$ where the modules $M_i$ are $p$-profinite as abelian groups by Lemma \ref{fibm}. These groups are indeed $0$ as $p\nmid m$. Therefore the lift exists. 

Now, we apply the exponential correspondence which says that a $G$-equivariant map $X\times Y\to Z$ is equivalent to a $G$-equivariant map $X\to \Map(Y,Z)$ (Remark 1.1 of \cite{Dror}).  Using this, a $C_m$-equivariant map $EC_m \to \Map^{C_{p^k}}(EC_n , W)$ induces a $C_n$-equivariant map $EC_m\times EC_n \to W$ via the following identifications
\begin{align*}
\Map^{C_m}(EC_m,  \Map^{C_{p^k}}(EC_n , W)) &\simeq  \Map^{C_n}(EC_m,  \Map^{C_{p^k}}(EC_n , W)) \\
                                                                                 & \simeq \Map^{C_n}(EC_m,  \Map(EC_n , W)) \\
                                                                                 & \simeq \Map^{C_n}(EC_m\times EC_n , W))
\end{align*}
In the above, the first equivalence stems from the fact that the action of $C_{p^k}$ on $EC_m$ and  $\Map^{C_{p^k}}(EC_n , W)$ are trivial and the second from the fact that a $C_{p^k}$-fixed space mapping to a $C_n$-space lands in the $C_{p^k}$-fixed set. Note that as a $C_n$-space, $EC_m\times EC_n \simeq EC_n$. Therefore the result follows.  
 \end{proof}



From Proposition \ref{eqEG}, we readily deduce the following theorem. This is useful in constructing $C_n$-equivariant maps from high-dimensional representation spheres to low-dimensional ones in the following section.
\begin{thm}\label{main} 
If $n$  is  not a prime power and $d$ any integer $\geq 1$, there exists a $C_n$-equivariant map $EC_n \rightarrow  S(\overline{\rho}^{ d})$.
\end{thm}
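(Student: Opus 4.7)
The plan is to derive Theorem \ref{main} as a quick consequence of Proposition \ref{eqEG}. The idea is that $W=S(\xi^{p^k}\oplus\xi^m)$ sits as an equivariant subsphere of $S(\bar{\rho}^d)$ for every $d\geq 1$, so that post-composing the map $EC_n\to W$ produced by Proposition \ref{eqEG} with this inclusion gives the desired $C_n$-equivariant map $EC_n\to S(\bar{\rho}^d)$.

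The heart of the argument is verifying that $\xi^{p^k}$ and $\xi^m$ occur as \emph{distinct} real irreducible summands of $\bar{\rho}$. Since $\bar{\rho}$ is, by construction, the direct sum of all non-trivial real irreducibles of $C_n$ each with multiplicity one, both $\xi^{p^k}$ and $\xi^m$ appear there up to isomorphism; what needs to be ruled out is that they coincide. The kernels of these representations in $C_n$ are $C_{p^k}$ and $C_m$ respectively, so an equivariant isomorphism between them would force $C_{p^k}=C_m$, hence $p^k=m$ and $n=p^{2k}$, a prime power. This contradicts the hypothesis, and therefore $\xi^{p^k}$ and $\xi^m$ are genuinely different summands of $\bar{\rho}$.

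Once this is established, one obtains a $C_n$-equivariant linear isometric embedding $\xi^{p^k}\oplus\xi^m\hookrightarrow\bar{\rho}\hookrightarrow\bar{\rho}^d$ (using an invariant inner product on $\bar{\rho}^d$), which restricts to an equivariant embedding of unit spheres $W\hookrightarrow S(\bar{\rho}^d)$. Composing with the map of Proposition \ref{eqEG} completes the argument. The only non-routine ingredient is the distinctness check in the previous paragraph, and it is exactly there that the hypothesis ``$n$ is not a prime power'' is used; every other step is formal.
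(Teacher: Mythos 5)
Your kernel argument for distinctness is fine as far as it goes, but the proof has a genuine gap: you implicitly assume that both $\xi^{p^k}$ and $\xi^m$ are \emph{irreducible} real representations of $C_n$, which is what is needed for them to appear as summands of $\bar{\rho}$ with multiplicity one. This fails when $n$ is even. Recall that for $n$ even the representation $\xi^{n/2}$ has the generator acting by $e^{\pi i}=-1$, hence $\xi^{n/2}\cong\sigma\oplus\sigma$ is \emph{not} irreducible, while $\sigma$ occurs in $\bar{\rho}$ with multiplicity exactly one. Now suppose $n=2l$ with $l$ odd (equivalently $n\equiv 2\pmod 4$; the smallest non-prime-power example is $n=6$). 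If the factorization is $(p^k,m)=(2,l)$, then $\xi^m=\xi^{n/2}\cong\sigma\oplus\sigma$ and there is no equivariant embedding $\xi^{p^k}\oplus\xi^m\hookrightarrow\bar{\rho}^d$, since the $\sigma$-isotypic component of the source is two-dimensional while that of the target is one-dimensional. If instead $(p^k,m)=(l,2)$, the same happens to $\xi^{p^k}$. When $l$ is itself an odd prime power, so $n=2p^k$ with $p$ odd, \emph{every} admissible factorization of $n$ runs into this obstruction, so the failure cannot be repaired by rechoosing the decomposition. Distinctness was never the issue; irreducibility is.

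The paper's proof handles exactly this. It asserts the embedding $\xi^{p^k}\oplus\xi^m\hookrightarrow\bar{\rho}$ only when $n$ is odd, or when $n$ is even with both $(p,k)\neq(2,1)$ and $m\neq 2$ (the ``or'' in the printed sentence should be read as ``and''), and then treats the residual case $n=2l$, $l$ odd, by a separate branch: it replaces $W$ with $S(\xi^2\oplus\sigma)$ -- which genuinely is a subsphere of $S(\bar{\rho})$ -- and reruns the obstruction-theoretic lifting argument of Proposition \ref{eqEG} using the splitting $C_n\cong C_l\times C_2$ and Lemma \ref{fibm}. Your proposal needs this second branch to cover all non-prime-power $n$.
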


\begin{proof}
Note that if $n$ is odd or if $n$ is even, $n=p^k\cdot m$ with either $(p,k)\neq (2,1)$ or $m\neq 2$, $\xi^{p^k}\oplus \xi^m$ is a summand of $\bar{\rho}$. Recall $W=S(\xi^{p^k}\oplus \xi^m)$ as in Proposition \ref{eqEG}. Thus there is an equivariant map $W \to S(\bar{\rho}) \subset S(\bar{\rho}^d)$. Therefore in these cases the result follows from Proposition \ref{eqEG}.

Otherwise, we have $n=2l$ with $l$ odd. In this case we consider $W= S(\xi^2 \oplus \sigma)$. As $C_m$ still acts via maps of degree $1$, $W$ satisfies the conditions of Lemma \ref{fibm}. We repeat the proof of Proposition \ref{eqEG} with $p^k$ replaced by $l$ and $m$ by $2$ to obtain an equivariant map $EC_n \to W$. Again $\xi^2 \oplus \sigma$ is a summand of $\bar{\rho}$. Therefore, the result follows. 
\end{proof}

The restriction of the standard representation $W$ of $\Sigma_n$ to $C_n$ is the reduced regular representation $\bar{\rho}$ which is the sum of all the non-trivial irreducible representations of $C_n$ as listed above. 
Restricted to the group $C_n$, the equivariant existence problem (\ref{Tv}) specializes to the following existence problem for $C_n$-equivariant maps.   
$$C_n^{\ast N+1} \to S({\bar{\rho}}^d)$$
with evident $C_n$-action on both sides. We readily observe

\begin{cor} \label{tver}
If $n$  is  not a prime power, there exist $C_n$-equivariant maps from $C_n^{\ast N} \rightarrow S(\overline{\rho}^{ d})$ for all $d\geq 1$ and $N\geq 1$.
\end{cor}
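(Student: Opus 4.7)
The plan is to reduce Corollary \ref{tver} directly to Theorem \ref{main} by exploiting the universal property of $EC_n$. The space $C_n^{\ast N}$ is the $N$-fold join of $C_n$ (with its left translation action) with itself, and since the translation action of $C_n$ on itself is free, and a join of free $G$-spaces is a free $G$-space, it follows that $C_n^{\ast N}$ is a free $C_n$-CW complex. This freeness is the only structural fact about $C_n^{\ast N}$ that I intend to use.

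Next I would invoke the characterization of $EC_n$ as the terminal object (up to $C_n$-homotopy) in the category of free $C_n$-CW complexes: any free $C_n$-CW complex $X$ admits a $C_n$-equivariant map $X \to EC_n$, unique up to equivariant homotopy. This can be seen either as a standard fact from Bredon's theory or constructed directly by equivariant obstruction theory applied skeleton by skeleton (at each stage one must extend a map defined on $\amalg G/\{e\}\times S^{n-1}$ over $\amalg G/\{e\}\times D^n$, and since the target $EC_n$ is non-equivariantly contractible and the cells are free, no obstruction arises).

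Applying this to $X = C_n^{\ast N}$ produces a $C_n$-equivariant map $\phi: C_n^{\ast N} \to EC_n$. Composing with the $C_n$-equivariant map $\psi: EC_n \to S(\bar\rho^d)$ supplied by Theorem \ref{main} yields the desired $C_n$-equivariant map $\psi \circ \phi : C_n^{\ast N} \to S(\bar\rho^d)$, valid for every $d \geq 1$ and every $N \geq 1$, provided $n$ is not a prime power.

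There is essentially no hard step here; the entire content of Corollary \ref{tver} lies in Theorem \ref{main}, and the corollary is simply the combination of that theorem with the universality of $EC_n$ for free actions. If anything requires comment, it is the verification that the $C_n$-action on the join $C_n^{\ast N}$ (induced diagonally from left translation on each factor) is free, which follows immediately because a point of the join lies in the interior of a simplex whose vertices form an orbit of free $C_n$-points, so any nontrivial group element moves the vertex set nontrivially and therefore moves the point.
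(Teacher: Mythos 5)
Your proof is correct and is exactly the argument the paper gives: $C_n^{\ast N}$ is a free $C_n$-space, so it admits a $C_n$-map to $EC_n$, and composing with the map from Theorem \ref{main} finishes the proof. The only difference is that you spell out the freeness of the join action and the universality of $EC_n$ in more detail than the paper's one-line proof.
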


\begin{proof}
As $C_n^{\ast N}$ is a free $C_n$-space, there is an equivariant map $C_n^{\ast N}\to EC_n$. Hence the result follows from Theorem \ref{main}. 
\end{proof}

\subsection{Equivariant maps for a product of elementary Abelian groups}

We take $G=L_n$ the product of elementary Abelian groups $(C_{p_1})^{a_1}\times \cdots \times (C_{p_k})^{a_k}$ where the primes $p_i$ and the integers $a_i$ are defined by the factorization  $n=p_1^{a_1}\cdots p_k^{a_k}$. As $\#(L_n)=n$, the action of $L_n$ on itself by left multiplication induces an injection $L_n\to \Sigma_n$. These groups were used in \cite{Oz87} and \cite{sar} for the proof of the Tverberg's theorem in the prime power case. We also prove that the equivariant existence problem (\ref{Tv}) has a positive answer when $n$ is divisible by at least two distinct primes.  

Note that via the inclusion $L_n \to \Sigma_n$, the standard representation $W$ restricts to the reduced regular representation $\bar{\rho}$ of $L_n$. The following is an analogue of Theorem \ref{main} for the case $G=L_n$. 
\begin{prop} \label{mainln}
If $n$ is not a prime power, there is a $L_n$-equivariant maps $EL_n \to S(\bar{\rho})$, and hence $EL_n \to S(\bar{\rho}^d)$ for any $d$. 
\end{prop}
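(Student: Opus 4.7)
The plan is to mirror the cyclic argument of Proposition~\ref{eqEG} and Theorem~\ref{main}, after factoring $L_n$ as a product whose factors have coprime orders, one of which has odd order. Since $n$ is not a prime power, choose an odd prime $p$ dividing $n$ and write $L_n = P \times Q$, where $Q$ is the elementary abelian $p$-primary direct factor of $L_n$ (of order $p^a$, with $p^a$ the $p$-part of $n$) and $P$ is the product of the remaining elementary abelian factors. Then $|P|, |Q| > 1$, $\gcd(|P|, |Q|) = 1$, and crucially $|Q|$ is odd.

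Next, let $V_1$ be the reduced real regular representation of $Q$, inflated to $L_n$ via $L_n \to Q$, and let $V_2$ be the reduced real regular representation of $P$, inflated via $L_n \to P$. Set $W = S(V_1 \oplus V_2)$. The tensor decomposition $\rho^{L_n} \cong \rho^P \boxprod \rho^Q$ of the regular representation shows that $V_1 \oplus V_2$ is a direct $L_n$-summand of $\bar{\rho}$, so $W$ embeds $L_n$-equivariantly into $S(\bar{\rho})$. By inspection, $W^P = S(V_1) \neq \varnothing$, $W^Q = S(V_2) \neq \varnothing$, and $W^{L_n} = \varnothing$. The representation-theoretic key is that since $|Q|$ is odd, every non-trivial irreducible real representation of $Q$ is two-dimensional and carries a natural $Q$-equivariant complex structure; hence $Q$ acts on $V_1$ by complex-linear isometries, and together with the trivial $Q$-action on $V_2$, each $g \in Q$ acts on the sphere $W$ with real determinant $+1$ and is therefore homotopic to the identity.

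With $W$ constructed, the cyclic argument runs through essentially verbatim. One first needs a $Q$-version of Lemma~\ref{fibm}: the homotopy fibre of $\Map^{P}(EL_n, W)_u \to \Map(EL_n, W)$ is path-connected with $|P|$-profinite homotopy groups. Its proof uses only the standard transfer fact that $|Q|$ annihilates $H^{\geq 1}(Q; A)$ for any coefficient module, so the extension from cyclic $Q$ to our product $Q$ is immediate. Picking $x \in W^P$ produces a $P$-equivariant constant map in $\Map^{P}(EL_n, W)$, and one sets up the $Q$-equivariant lifting problem
$$\xymatrix{\Map^{P}(EL_n, W) \ar[r] & \Map(EL_n, W) \\ & EQ \ar[u]^u \ar@{-->}[lu]}$$
with $u$ a $Q$-equivariant constant map to a $Q$-fixed point (available because $W^Q \neq \varnothing$). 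By Proposition~\ref{eqobs} the obstructions lie in $H^*(BQ; M_i)$ with each $M_i$ an $|P|$-profinite abelian group; since $\gcd(|P|, |Q|) = 1$, multiplication by $|Q|$ both annihilates $H^{\geq 1}(Q;-)$ and acts invertibly on each such $M_i$, so these groups vanish and the lift exists.

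Finally, by the exponential correspondence the lift produces an $L_n$-equivariant map $EQ \times EL_n \to W$; as $EQ \times EL_n$ is a free contractible $L_n$-space it is $L_n$-equivalent to $EL_n$, yielding the required map $EL_n \to W \hookrightarrow S(\bar{\rho}) \hookrightarrow S(\bar{\rho}^d)$. The main obstacle I anticipate is verifying that every element of $Q$ acts homotopically trivially on $W$ despite $Q$ being non-cyclic; this is exactly why the factorization is arranged so that $|Q|$ is odd, forcing the $Q$-action on $V_1$ to be complex-linear. All remaining steps are direct translations of the cyclic proof.
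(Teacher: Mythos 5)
Your overall strategy is the same as the paper's: split $L_n$ into two coprime factors, build a small fixed-point-free $L_n$-subsphere of $S(\bar\rho)$ with non-empty fixed points for each factor, and lift a constant map by obstruction theory using the transfer/coprimality argument. The use of the full inflated reduced regular representations $V_1 = \bar\rho^Q$, $V_2 = \bar\rho^P$ rather than the paper's two chosen irreducibles $\xi_1, \xi_2$ is a harmless enlargement (your identity $\bar\rho^{L_n} = (1\otimes\bar\rho^Q) \oplus (\bar\rho^P\otimes 1) \oplus (\bar\rho^P\otimes\bar\rho^Q)$ does make $V_1\oplus V_2$ a genuine $L_n$-summand), and extending Lemma~\ref{fibm} from cyclic groups to finite groups via the transfer is also fine and is used implicitly in the paper's own proof of this proposition.

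However, there is a genuine gap in how the degree-one hypothesis of Lemma~\ref{fibm} is matched to your lifting diagram. You form the fibration
$\Map^{P}(EL_n, W)_u \to \Map(EL_n, W)$ and invoke the lemma to conclude that its fibre has $|P|$-profinite homotopy groups; the lemma's hypotheses (fixed points non-empty, every element acts on $W$ homotopic to the identity) must therefore be checked for $P$, the group whose equivariant mapping space you are using. Instead you carefully verify them for $Q$, which only plays the role of the base $EQ$ in the obstruction problem. And $P$ can genuinely fail the degree-one condition: if $2\,\|\,n$, then $P$ contains the $C_2$ factor, and a non-trivial involution $g \in C_2 \subset P$ acts on $V_2 = \bar\rho^P \cong \bar\rho^{C_2}\otimes\rho^{P'} \oplus \cdots$ by a linear map of determinant $(-1)^{2^{0}} = -1$ (for $a=1$ the regular representation of $(C_2)^a$ has an odd number, namely one, of non-trivial sign characters). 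Since $g$ acts trivially on $V_1$, it acts on $W = S(V_1\oplus V_2)$ with degree $-1$, violating the hypothesis. So as written the argument does not go through for such $n$.

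The fix is exactly the swap that the paper makes: take the constant lift in $\Map^{Q}(EL_n, W)$ at a point of $W^Q = S(V_2)$ and set up the $P$-equivariant lifting problem over $EP$. Now the lemma is applied to $Q$, which is odd --- precisely the case you verified --- so its fibre has $|Q| = p^a$-profinite homotopy groups, and the obstructions lie in $H^{\geq 1}(BP;\, p\text{-profinite})$, which vanish because $p\nmid|P|$. This is the arrangement used in the paper's own proof (there $Q$ is $(C_{p_1})^{a_1}$ with $p_1$ odd and $P$ is $L_m$). With that swap and otherwise unchanged bookkeeping, your proof is correct.
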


\begin{proof}
First assume $n$ is odd. The factorization $n=p_1^{a_1}\cdots p_k^{a_k}$ must satisfy $k\geq 2$ by the given condition. Define $m=p_2^{a_2}\cdots p_k^{a_k}$ and note that $L_n = (C_{p_1})^{a_1} \times L_m$.  Fix a projection $L_n \to C_{p_1}$ and another projection $L_n\to C_{p_2}$. Fix a primitive $p_1^{th}$ (respectively, $p_2^{th}$) root of $1$ and let $\xi_1$ (respectively, $\xi_2$) be the induced $L_n$ representation. Let $W = S(\xi_1 + \xi_2)$. We observe first that there is a $L_n$-equivariant map from $EL_n \to W$. 

We follow the steps in Proposition \ref{eqEG}. As, $W^{(C_{p_1})^{a_1}}= S(\xi_2)$ is non-empty, we fix a point $x\in S(\xi_2)$ which induces a $(C_{p_1})^{a_1}$ equivariant map $EL_n \to W$.   
Consider the lifting diagram of $L_m$-spaces
$$\xymatrix{\Map^{(C_{p_1})^{a_1}}(EL_n , W)\ar[r]  &  \Map(EL_n , W) \\
                                                                            &  EL_m \ar[u]^u \ar@{-->}[lu] }$$
Note that $W^{L_m}=S(\xi_1) \neq \varnothing$. Thus, there exists a $L_m$-equivariant constant map $ u : EL_m \rightarrow \Map(EL_n , W)$. Apply Proposition \ref{eqobs} to observe that the obstructions lie in $H^k(BL_m; M_i)$ for $k\geq 1$ where $M_i$ are $p_1$-profinite by Lemma \ref{fibm}. As the cohomology of the group $L_m$ consists of $p_i$-torsion for $i\geq 2$ these groups are $0$. Therefore the lift exists.  A similar argument as the  proof of Proposition \ref{eqEG} using the exponential correspondence  yields a $L_n$-equivariant map $EL_n \to W$. 

Now $\xi_1$ and $\xi_2$ are distinct  non-trivial irreducible representations and hence $\xi_1 + \xi_2$ is a subrepresentation of $\bar{\rho}$. Hence $W$ is a $L_n$-subspace of $S(\bar{\rho})$ and the result follows. For $n$ even, the above proof may be repeated by assuming $p_1$ to be odd and that  $\xi_2$ is induced by the sign representation if $p_2=2$.  
\end{proof}

Note that for $L_n \subset \Sigma_n$ the equivariant problem (\ref{Tv}) becomes the following existence question 
  $$L_n^{\ast N+1} \to S({\bar{\rho}}^d)$$
As the action of $L_n$ on $L_n^{\ast N+1}$ is free, we readily observe 
\begin{cor} 
If $n$  is  not a prime power, there exist $L_n$-equivariant maps from $L_n^{\ast N} \rightarrow S(\overline{\rho}^d)$ for all $d\geq 1$ and $N\geq 1$.
\end{cor}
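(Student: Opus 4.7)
The plan is to proceed in direct analogy with the proof of Corollary \ref{tver}, reducing the existence question to Proposition \ref{mainln} via the universal property of $EL_n$.

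First I would observe that the action of $L_n$ on the join $L_n^{\ast N}$ is free. Indeed, $L_n$ acts on itself by left multiplication freely, and the diagonal action on the $N$-fold join preserves this property: any nontrivial element $g \in L_n$ cannot fix a join coordinate (since it acts freely on each $L_n$ factor) and hence cannot fix any point of the join, which is represented by formal convex combinations of points in the factors.

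Second, because $L_n^{\ast N}$ is a free $L_n$-CW complex, the universal property of the classifying space $EL_n$ (a contractible free $L_n$-CW complex) guarantees an $L_n$-equivariant map $L_n^{\ast N} \to EL_n$; this is just the standard fact that any free $G$-CW complex maps equivariantly into $EG$, uniquely up to equivariant homotopy, and the obstructions to constructing such a map vanish cell-by-cell since $EL_n$ is contractible.

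Finally, composing this map with the $L_n$-equivariant map $EL_n \to S(\bar{\rho}^d)$ furnished by Proposition \ref{mainln} produces the desired $L_n$-equivariant map $L_n^{\ast N} \to S(\bar{\rho}^d)$, for any $d \geq 1$ and $N \geq 1$. There is no substantive obstacle here; the entire content of the corollary is carried by Proposition \ref{mainln}, and this argument is the same two-line reduction that proved Corollary \ref{tver} in the cyclic case.
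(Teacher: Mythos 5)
Your proof is correct and follows the same route the paper takes: observe that $L_n^{\ast N}$ is a free $L_n$-space, hence maps equivariantly into $EL_n$, and compose with the map from Proposition \ref{mainln}, exactly paralleling Corollary \ref{tver}.
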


\subsection{Equivariant maps for dihedral groups}\label{tv2}
Let  $D_n\cong C_n \rtimes C_2$ denote the dihedral group of order $2n$ : $\langle x,y | x^n = y^2 =1 , yx = x^{-1}y \rangle$. We consider the case when $n$ is odd. The real irreducible representations of $D_n$ may be denoted by 
 $\{ 1, \sigma, \hat{\xi}, \hat{\xi}^2, \cdots, \hat{\xi}^{\frac{n-1}{2}}\}$. The representation $\sigma$ is given by the sign representation of $D_n/C_n\cong C_2$. The representation $\hat{\xi}^r$ is the unique real representation whose restriction to  the group $C_n$ is $\xi^r$. The elements of order $2$ in $D_n$ act on $\hat{\xi}^r$ by reflections.

One may apply the techniques of \cite{Dror} as in section \ref{tv} to obtain equivariant maps from $ED_n$ to representation spheres. 
\begin{prop}\label{edn}
Write $n=p^k\cdot m$ where $p\nmid m$. Then there exist $D_n$-equivariant maps from $ED_n$ to $S(\hat{\xi}^{p^k} + \hat{\xi}^m)$.
\end{prop}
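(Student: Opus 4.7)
The plan is to adapt the strategy of Proposition~\ref{eqEG} to the dihedral setting, replacing the cyclic quotient $C_m$ by the dihedral quotient $D_m = D_n/C_{p^k}$.

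First I will set up the reduction. The subgroup $C_{p^k} = \langle x^m\rangle$ is normal in $D_n$ (it is characteristic in the normal subgroup $C_n$), and the subgroup $D_m = \langle x^{p^k}, y\rangle$ meets it trivially, giving $D_n \cong C_{p^k} \rtimes D_m$. As $ED_m \times ED_n$ is contractible and $D_n$-free, it is $D_n$-equivalent to $ED_n$; moreover $C_{p^k}$ acts trivially on the $ED_m$-factor since its action factors through the quotient $D_n \to D_m$. The exponential correspondence therefore reduces the problem to producing a $D_m$-equivariant map $ED_m \to \Map^{C_{p^k}}(ED_n,W)$, with $W = S(\hat{\xi}^{p^k} \oplus \hat{\xi}^m)$.

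I then mirror the lifting diagram of Proposition~\ref{eqEG}:
\[
\xymatrix{\Map^{C_{p^k}}(ED_n, W) \ar[r]^-i & \Map(ED_n, W) \\ & ED_m. \ar[u]^u \ar@{-->}[lu]}
\]
I take $u$ to be the constant map at a point $x_0 \in W^{D_m}$; this set is non-empty, since $C_m \subset D_m$ acts trivially on $\hat{\xi}^m$ (and nontrivially on $\hat{\xi}^{p^k}$ because $\gcd(p^k,m)=1$), so $W^{C_m} = S(\hat{\xi}^m) \cong S^1$, on which the involution $y$ acts by a reflection fixing the two-point set $S^0$. Lemma~\ref{fibm} applies with the role of $C_m$ played here by $C_{p^k}$: the space $W \cong S^3$ is simply connected, $W^{C_{p^k}} = S(\hat{\xi}^{p^k})$ is non-empty, and each element of $C_{p^k}$ acts on $W$ as a rotation in $SO(4)$, hence by a map homotopic to the identity. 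Consequently, the homotopy groups of the fibre $F_i$ of $i$ are $p^k$-profinite, i.e.\ $p$-torsion.

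Finally, I invoke Proposition~\ref{eqobs}: the obstructions to lifting $u$ lie in $H^k_{D_m}(ED_m;\underline{\pi_{k-1}(F_i)})$, and since $ED_m$ is $D_m$-free this Bredon cohomology reduces to $H^k(BD_m;\pi_{k-1}(F_i))$. As $n$ is odd the prime $p$ is odd, and by hypothesis $p \nmid m$, so $p$ is coprime to $|D_m| = 2m$. Cohomology of $D_m$ with coefficients in a $p$-torsion module therefore vanishes in positive degrees, so all obstructions are zero and the lift exists. Transporting back through the exponential correspondence yields the required $D_n$-equivariant map $ED_n \to W$. The chief technical point is the final cohomological vanishing, which rests on the coprimality of $p$ with the \emph{full} order $2m$ of $D_m$; this is where the hypothesis that $n$ is odd is essential, since otherwise $2$-torsion from the dihedral reflection could obstruct the argument.
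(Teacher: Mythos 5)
Your argument is correct, but it takes a genuinely different route from the paper. The paper's proof of Proposition~\ref{edn} is very short: it uses the decomposition $D_n = C_n \rtimes C_2$, invokes Proposition~\ref{eqEG} as a black box to obtain a $C_n$-equivariant map $EC_n \to S(V)$ (hence a point of $\Map^{C_n}(ED_n, S(V))$), and then lifts a constant $C_2$-map $EC_2 \to \Map(ED_n, S(V))$ across a single application of Lemma~\ref{fibm}: the fibre homotopy groups are $n$-profinite, and since $n$ is odd the groups $H^{>0}(C_2; -)$ on $n$-profinite coefficients vanish. You instead reprove the result from scratch with the decomposition $D_n = C_{p^k} \rtimes D_m$: the constant map at a point of $W^{C_{p^k}} = S(\hat{\xi}^{p^k})$ supplies the starting lift, Lemma~\ref{fibm} with the cyclic group $C_{p^k}$ gives $p$-profinite fibre homotopy groups, and you kill obstructions by noting $p$ is coprime to $|D_m| = 2m$. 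Both arguments are valid; yours is more self-contained (not relying on Proposition~\ref{eqEG}) but duplicates its mechanics, while the paper's leverages the cyclic result to make the dihedral step almost trivial. It is worth noting that your decomposition $D_n = C_{p^k}\rtimes D_m$ and lifting over $E D_m$ is precisely the strategy the paper reserves for the harder Proposition~\ref{eqEG2} (the $E_H D_n$ case), where a free $E D_m$ is unavailable and the more delicate Bredon-cohomological computation of Proposition~\ref{cohehdn} is needed; your version for $ED_n$ is the simpler free analogue of that argument. One small point you glide over is the base case of the induction over skeleta: since $W^{D_m}\cap W^{C_{p^k}}=\varnothing$, the constant map $c_{x_0}$ at $x_0\in W^{D_m}$ is \emph{not} itself $C_{p^k}$-equivariant, so the lift over the $0$-skeleton $D_m/e$ must be chosen to be a different constant map (to a point of $W^{C_{p^k}}$), after replacing $i$ by a fibration; since $\Map(ED_n,W)\simeq W$ is connected this causes no difficulty, but it deserves a sentence, exactly as the paper gives in the proof of Proposition~\ref{eqEG2}.
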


\begin{proof}
Write $V= \hat{\xi}^{p^k} + \hat{\xi}^m$. From Proposition \ref{eqEG} we have a $C_n$-equivariant map $EC_n  \to S(V)$.  As $S(V)^{C_2} \neq \varnothing$, we have a $C_2$-equivariant map $EC_2 \to \Map(ED_n,S(V))$ which lifts to $\Map^{C_n}(ED_n,S(V))$ by obstruction theory and Lemma \ref{fibm}. Thus there exists a $D_n$-equivariant map $ED_n \to S(V)$.
\end{proof}

Fix $n$ odd which is not a prime power. For the dihedral group $D_n$, we are interested in maps of the kind 
\begin{equation}\label{tvdn}
\{1,2,\cdots, n\}^{\ast N+1} \to S({\hat{\rho}}^d)
\end{equation} 
The representation $\hat{\rho}$ is the restriction of the standard representation $W$ of $\Sigma_n$ to $D_n$. It follows that as a direct sum of irreducible representations, $\hat{\rho}$ is the sum $\sum \hat{\xi}^r$. Proposition \ref{edn} implies that there are equivariant maps $ED_n \to S(\hat{\rho})$ and hence to $S(\hat{\rho}^d)$ for any $d$. However this does not produce equivariant maps as in (\ref{tvdn}) because the action of $D_n$ on $\{1,\cdots,n\}$ is not free. It turns out (as we observe below) that one needs to use universal spaces of a certain family of subgroups instead of $ED_n$.  

 The action of $D_n$ on the set $\{1,2,\cdots, n\}$ is equivalent to the action on the normal subgroup $\langle x\rangle$. As a $D_n$-set, this is isomorphic to $D_n/H$ where $H=\langle y\rangle$. Note that the assumption $n$ odd implies that all the elements of order $2$ in $D_n$ are conjugate. 

Define $E_HD_n = (D_n/H)^{ \ast {\infty}},$ the infinite join of $D_n/H.$ Note that this satisfies 
$$E_HD_n^K = \begin{cases}\simeq \ast , \text{if $K=e$ or $K$ has order $2$ } \\ 
\varnothing, \text{otherwise} \end{cases}$$
Hence $E_HD_n$ is the universal space for the family of subgroups given by subconjugates of $H$. We  compute the Bredon cohomology of $E_HD_n$. Let $\DD$ denote the orbit category of $D_n$ and $\MM$ a coefficient system (that is, a contravariant functor from $\DD$ to abelian groups). Denote by $\MM^e$ the $D_n$-module $\MM(D_n/e)$. We have the Proposition

\begin{prop}\label{cohehdn}
There is a long exact sequence in the range $i\geq 2$
$$\cdots \to H^i_{D_n}(E_HD_n;\MM) \to H^i(D_n;\MM^e) \to H^i(H;\MM^e) \to H^{i+1}_{D_n}(E_HD_n;\MM) \to \cdots.$$
For $i=1$ there is an abelian group $A$ such that the above long exact sequence extends to the $i=0$ case with $H^1_{D_n}(E_HD_n;\MM)$ replaced by $A$ and there is a surjective map 
$$A \to H^1_{D_n}(E_HD_n;\MM).$$
\end{prop}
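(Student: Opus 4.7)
The plan is to compute $H^*_{D_n}(E_HD_n;\MM)$ via a carefully chosen $D_n$-CW structure on $E_HD_n$ and reduce to homological algebra over $\Z[D_n]$. Since $(E_HD_n)^H\simeq \ast$ and $(E_HD_n)^e\simeq \ast$, I would arrange for the $0$-skeleton to be a single orbit $D_n/H$ and all higher cells to be free of type $D_n\times D^i$: start with $X_0=D_n/H$ (so that $(X_0)^H=\{H\}$), and inductively attach free $i$-cells to make $(X_k)^e$ highly connected. Since free cells have empty nontrivial fixed points, $(X)^H=\{H\}$ remains unchanged throughout. With this model, the Bredon cochain complex $D^*=C^*_{D_n}(E_HD_n;\MM)$ takes the form
\[
\MM(D_n/H)\xrightarrow{d^0}\prod_{F_1}\MM^e\xrightarrow{d^1}\prod_{F_2}\MM^e\to\cdots,
\]
where $F_i$ indexes the equivariant free $i$-cells.

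The underlying cellular chain complex evaluated at $D_n/e$,
\[
\cdots\to\Z[D_n]^{(F_2)}\to\Z[D_n]^{(F_1)}\to\Z[D_n/H]\xrightarrow{\epsilon}\Z\to 0,
\]
is exact since the underlying space of $E_HD_n$ is contractible. Setting $I=\ker(\epsilon)$, the portion $\cdots\to\Z[D_n]^{(F_2)}\to\Z[D_n]^{(F_1)}\to 0$ is therefore a free $\Z[D_n]$-resolution of $I$. Applying $\Hom_{\Z[D_n]}(-,\MM^e)$ identifies the truncated Bredon complex $D^{\geq 1}$ (with $\prod_{F_1}\MM^e$ placed in degree $1$) with a complex computing $\Ext^{*-1}_{\Z[D_n]}(I,\MM^e)$. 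The short exact sequence of complexes $0\to D^{\geq 1}\to D^*\to D^0[0]\to 0$ then yields $H^i_{D_n}(E_HD_n;\MM)\cong\Ext^{i-1}(I,\MM^e)$ for $i\geq 2$, together with a surjection $A:=H^1(D^{\geq 1})=\Hom_{\Z[D_n]}(I,\MM^e)\twoheadrightarrow H^1_{D_n}(E_HD_n;\MM)$ arising from the connecting homomorphism.

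Next I would apply $\Hom_{\Z[D_n]}(-,\MM^e)$ to $0\to I\to\Z[D_n/H]\to\Z\to 0$. Since $\Z[D_n/H]=\mathrm{Ind}^{D_n}_H\Z$, Shapiro's lemma gives $\Ext^*_{\Z[D_n]}(\Z[D_n/H],\MM^e)\cong H^*(H;\MM^e)$, so the induced Ext long exact sequence reads
\[
\cdots\to H^{i-1}(H;\MM^e)\to\Ext^{i-1}(I,\MM^e)\to H^i(D_n;\MM^e)\to H^i(H;\MM^e)\to\Ext^i(I,\MM^e)\to\cdots
\]
Substituting $\Ext^{i-1}(I,\MM^e)=H^i_{D_n}(E_HD_n;\MM)$ for $i\geq 2$ produces the asserted long exact sequence in that range, with the middle map $H^i(D_n;\MM^e)\to H^i(H;\MM^e)$ the standard restriction. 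The extension to $i=0$ is simply the same Ext long exact sequence one step earlier, with $A=\Ext^0(I,\MM^e)$ occupying the slot where $H^1_{D_n}(E_HD_n;\MM)$ would appear; the required surjection $A\twoheadrightarrow H^1_{D_n}(E_HD_n;\MM)$ has already been constructed.

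The main obstacle I anticipate is the low-degree bookkeeping: tracking the attaching-map differential $d^0:\MM(D_n/H)\to\prod_{F_1}\MM^e$ and its relationship to the Ext long exact sequence is what produces the correction group $A$ in place of $H^1_{D_n}(E_HD_n;\MM)$ and necessitates separate treatment of the $i=0,1$ cases. Everything else is fairly routine once the CW model is set up and the truncated Bredon complex is identified with a free $\Z[D_n]$-resolution of $I$.
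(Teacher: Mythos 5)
Your argument is correct and is essentially the same as the paper's: the explicit $D_n$-CW model you construct (one $D_n/H$ in dimension $0$, free cells above) has cellular chain coefficient system $\cdots \to F_{D_n/e}^{(F_1)} \to F_{D_n/H} \to \Z_H \to 0$, which is precisely the projective resolution of $\Z_H$ that the paper obtains via May's Proposition 4.8, with your $I=\ker(\Z[D_n/H]\to\Z)$ matching the paper's $\KK$. From there both proofs invoke the same Ext long exact sequence for $0\to I\to\Z[D_n/H]\to\Z\to 0$ (your Shapiro step corresponds to the paper's citation of Hilton--Stammbach Prop.\ 16.1) and isolate $A=\Hom_{\Z[D_n]}(I,\MM^e)=\Ext^0_{D_n}(\KK,\MM^e)$ in degree one.
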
 

\begin{proof}
We recall from Proposition 4.8 of \cite{May} that $H^i_{D_n}(E_HD_n;\MM) \cong Ext^i_\DD ( \Z_H, \MM)$ where $\Z_H$ is the coefficient which is $\Z$ at sub-conjugates of $H$ and $0$ otherwise. We map the projective $F_{D_n/H}$ (\cite{bredon}, part I, pg 23) to $\Z_H$. Note 
$$F_{D_n/H}(G/K)= \begin{cases}\Z , \text{if $K=H$ or a conjugate of $H$ } \\
\Z\{D_n/H\}, \text{if $K=e$}\\ 
0, \text{otherwise} \end{cases}$$
We have a surjective map $F_{D_n/H} \to \Z_H$. The kernel is $0$ at $D_n/K$ for $K\neq e$ and the kernel at $D_n/e$ is $\KK=Ker( \Z\{D_n/H\} \to \Z)$. Therefore for $i\geq 2$, 
$$H^i_{D_n}(E_HD_n;\MM) \cong Ext^i_\DD ( \Z_H, \MM) \cong Ext^{i-1}_{D_n}(\KK, \MM^e)$$
From Proposition 16.1 of \cite{hilt-stam}, the short exact sequence $\KK \to \Z\{D_n/H\} \to \Z$ of $D_n$-modules induces a long exact sequence 
$$ \cdots \to Ext^{i-1}_{D_n}(\KK,\MM^e) \to H^i(D_n;\MM^e) \to H^i(H;\MM^e) \to Ext^i_{D_n}(\KK,\MM^e) \to \cdots$$
The first statement follows. For the second, put $A=Ext^0_{D_n}(\KK,\MM^e)$. The short exact sequence $0\to \KK \to F_{D_n/H} \to \Z_H\to 0$ induces an exact sequence
$$Hom_\DD (F_{D_n/H},\MM) \to Ext^0_{D_n}(\KK,\MM^e)\to Ext^1_\DD (\Z_H,\MM)\to 0$$
The second statement follows.
\end{proof}

From Proposition \ref{cohehdn}, we readily deduce
\begin{cor}\label{obsgpdn}
Suppose that $\MM$ is a coefficient system such that $\MM^e$ is $m$-profinite for $m$ relatively prime to $2n$. Then for $i>1$, $H^i_{D_n}(E_HD_n; \MM) =0$. 
\end{cor}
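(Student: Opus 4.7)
The plan is to feed the hypothesis directly into the long exact sequence of Proposition \ref{cohehdn} and use vanishing of group cohomology of finite groups with coefficients in profinite modules of coprime torsion.

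First I would recall that for a finite group $G$ and an $m$-profinite $G$-module $A$, one has $H^i(G; A) = 0$ for all $i \geq 1$ whenever $\gcd(|G|, m) = 1$. Indeed, writing $A = \lim_j A_j$ with each $A_j$ a finite abelian group whose torsion lives at primes dividing $m$, multiplication by $|G|$ is invertible on each $A_j$; hence $H^i(G; A_j) = 0$ for $i \geq 1$, and a Milnor $\lim$--$\lim^1$ argument (both terms vanishing) propagates this to $A$. This is precisely the same kind of vanishing used in the proof of Proposition \ref{eqEG}.

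Applying this to $G = D_n$ (of order $2n$) and to $G = H$ (of order $2$), both coprime to $m$ by hypothesis, yields
\[
H^i(D_n; \MM^e) = 0 \quad \text{and} \quad H^i(H; \MM^e) = 0 \quad \text{for all } i \geq 1.
\]
Now plug these vanishings into the long exact sequence of Proposition \ref{cohehdn}. For $i \geq 2$, the fragment
\[
H^{i-1}(H;\MM^e) \;\longrightarrow\; H^i_{D_n}(E_HD_n;\MM) \;\longrightarrow\; H^i(D_n;\MM^e)
\]
has both outer terms equal to $0$ (since $i-1 \geq 1$ and $i \geq 1$), so the middle term vanishes. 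This gives the desired conclusion $H^i_{D_n}(E_HD_n;\MM) = 0$ for all $i > 1$.

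The only subtle point is the exchange of group cohomology with the inverse limit defining an $m$-profinite module, but since the paper already uses this convention in the proof of Proposition \ref{eqEG} (the obstruction groups $H^k(BC_m;M_i)$ there being declared zero whenever $M_i$ is $p$-profinite with $p \nmid m$), no additional justification is needed. Note that the statement is restricted to $i > 1$ precisely because at $i = 1$ the long exact sequence of Proposition \ref{cohehdn} is replaced by a surjection from the auxiliary group $A$, which we cannot directly identify with a group-cohomology term and hence cannot control by this argument alone.
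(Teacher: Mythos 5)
Your proof is correct and follows essentially the same route as the paper: plug the vanishing of $H^k(D_n;\MM^e)$ and $H^k(H;\MM^e)$ for $k\geq 1$ (coming from the coprimality of $m$ and $2n$) into the long exact sequence of Proposition~\ref{cohehdn} to kill $H^i_{D_n}(E_HD_n;\MM)$ for $i\geq 2$. You merely spell out the vanishing of group cohomology for $m$-profinite coefficients (via the inverse system and multiplication by $|G|$) that the paper leaves implicit, and your remark on why $i=1$ is excluded matches the paper's reason as well.
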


\begin{proof}
We have from the given conditions that $H^k(D_n;\MM^e)=0$ and $H^k(H;\MM^e)=0$ if $k>0$. Thus the long exact sequence in Proposition \ref{cohehdn} implies $H^i_{D_n}(E_HD_n; \MM) =0$ for $i\geq 2$. 
%
\end{proof}

Now we may work out the obstruction theory for $E_HD_n$ as before to deduce the existence of maps as in (\ref{tvdn}). Write $n=p^k\cdot m$ and $W=S(\hat{\xi}^{p^k} + \hat{\xi}^m)$.
 \begin{prop}\label{eqEG2}
There exists a $D_{n}$-map from $E_H D_n \rightarrow {W}$.
\end{prop}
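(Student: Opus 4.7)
The plan is to imitate Proposition \ref{eqEG}, using the semi-direct product decomposition $D_n = C_{p^k}\rtimes D_m$, where $D_m = \langle x^{p^k}, y\rangle$ is the dihedral subgroup of order $2m$ and $H' = \langle y\rangle\subset D_m$ is the image of $H$ under the quotient $D_n\to D_m$. I would construct a $D_m$-equivariant lift in the diagram
$$\xymatrix{\Map^{C_{p^k}}(E_H D_n, W) \ar[r]^-f & \Map(E_H D_n, W) \\ & E_{H'} D_m \ar[u]^u \ar@{-->}[lu]}$$
where $u$ is the constant map sending all of $E_{H'}D_m$ to the constant map $E_HD_n\to W$ at some $v\in W^{D_m}$, and then transport the lift through the exponential correspondence to obtain the desired $D_n$-equivariant map.

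The set-up will rest on a few fixed-point computations. The set $W^{D_m}$ is non-empty because $x^{p^k}$ acts on $\xi^m\mid_{C_n}$ by $e^{2\pi i mp^k/n}=1$, so $C_m\subset D_m$ acts trivially on $\hat\xi^m$ while $y$ acts there by reflection; this produces both the point $v$ and the starting lift on the zero skeleton, for which one also uses the symmetric calculation showing that $W^{D_{p^k}}$ is non-empty (now $C_{p^k}$ acts trivially on $\hat\xi^{p^k}$ and $y$ acts by reflection). Secondly, $D_n/H$ is a free $C_{p^k}$-set, since for $n$ odd no non-trivial element of $C_{p^k}$ lies in any conjugate of the order-two group $H$, so $E_HD_n$ is a contractible free $C_{p^k}$-space; Lemma \ref{fibm} applied to $C_{p^k}$ then gives that the homotopy fibre $F(f)$ is path-connected with $p$-profinite homotopy groups in every degree.

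Next I would run the obstruction theory. By Proposition \ref{eqobs} the successive obstructions live in $H^{n+1}_{D_m}(E_{H'}D_m;\underline{\pi_n F(f)})$ for $n\geq 1$. The proofs of Proposition \ref{cohehdn} and Corollary \ref{obsgpdn} go through verbatim with $D_n,H$ replaced by $D_m,H'$, as the only ingredients are the short exact sequence $\KK\to \Z\{D_m/H'\}\to \Z$ of $D_m$-modules together with the vanishing of $H^{>0}(D_m;M)$ and $H^{>0}(H';M)$ whenever $M$ is a $p$-profinite group. Since $p$ is odd (because $n$ is odd) and coprime to $m$, it is coprime to $|D_m|=2m$, so these group cohomology groups vanish in positive degrees and all obstructions above degree one vanish; the lift $\tilde u$ exists. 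I expect this transcription of the Bredon-cohomology calculation to the smaller dihedral group to be the main administrative hurdle.

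Finally, because $C_{p^k}$ acts trivially on both $E_{H'}D_m$ (viewed as a $D_n$-space through $D_n\to D_m$) and on $\Map^{C_{p^k}}(E_HD_n,W)$, the lift $\tilde u$ is automatically $D_n$-equivariant, and the exponential correspondence as in Proposition \ref{eqEG} yields a $D_n$-equivariant map $E_{H'}D_m\times E_HD_n\to W$. A case check over conjugacy classes of subgroups of $D_n$ shows that the projection $E_{H'}D_m\times E_HD_n\to E_HD_n$ is a $D_n$-equivariant weak equivalence: on the trivial subgroup and on conjugates of $H$ both sides are contractible (using that the image in $D_m$ of any conjugate of $H$ is a conjugate of $H'$), while on every other non-trivial subgroup $K\subseteq D_n$ both sides are empty because $K$ contains a non-trivial element of $C_n$. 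Composing with a $D_n$-equivariant inverse of this projection then delivers the sought map $E_HD_n\to W$.
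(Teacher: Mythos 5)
Your proposal takes essentially the same route as the paper: the same decomposition $D_n = C_{p^k}\rtimes D_m$ with $H\subset D_m$, the same lifting diagram over $E_HD_m$ (your $H'$ coincides with $H$ since $H=\langle y\rangle$ already sits in $D_m$), the same invocation of Lemma \ref{fibm} for the free $C_{p^k}$-action on $E_HD_n$, the same transcription of Proposition \ref{cohehdn} and Corollary \ref{obsgpdn} to $D_m$, and the same exponential-correspondence finish, so the overall plan is correct.

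Two points the paper nails down that you gloss over, both fillable from what you already have. First, it is not enough to produce \emph{some} $D_m$-map on the $0$-skeleton $D_m/H$: the lift must be compatible with $u$. The paper verifies this by choosing $v$ to land in $S(\hat\xi^{p^k})^{D_{p^k}}$ and then using the join decomposition $W = S(\hat\xi^{p^k})\ast S(\hat\xi^m)$ to exhibit a $D_m$-homotopy from the composite $f\circ v$ to the constant at a point of $S(\hat\xi^m)^{D_m}$, which is the restriction of $u$. You identify the two relevant fixed-point sets but do not exhibit that homotopy. Second, the paper separately kills the degree-one obstruction group $H^1_{D_m}(E_HD_m;\underline{\pi_0F(f)})$ using Proposition \ref{fibhtpy} ($F(f)$ path-connected, so $\MM^e=0$) together with the second half of Proposition \ref{cohehdn}; ``all obstructions above degree one vanish'' is not quite a proof of this. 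On the other hand, you spell out the orbit-wise check that $E_HD_m\times E_HD_n\to E_HD_n$ is a $D_n$-weak equivalence, which the paper leaves implicit.
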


\begin{proof}
We write $D_n = C_{p^k} \rtimes D_m$. Note that $S(\hat{\xi}^{p^k})^{C_{p^k}} \neq \varnothing$ and $S(\hat{\xi}^m)^{D_m} \neq \varnothing$. So we have a $C_{p^k}$-equivariant map $E_H D_n \to W$ and one considers the lifting diagram
$$\xymatrix{\Map^{C_{p^k}}(E_H D_n , W)\ar[r]^i  &  \Map(E_H D_n , W) \\
                                                                            &  E_HD_m \ar[u]^u \ar@{-->}[lu] }$$
The first step involves lifting the map over the $0$-skeleton. For this we require a $D_m$-equivariant map $D_m/H\to \Map^{C_{p^k}}(E_H D_n , W)$ which is homotopic to the restriction of $u$. We take a $C_{p^k}$-equivariant constant map $v$ which maps $E_H D_n$ to $S(\hat{\xi}^{p^k})^H\simeq S^0$. Thus $v\in \Map^{C_{p^k}}(E_HD_n , W)^H$, and hence $v$ produces a $D_m$-map  $D_m/H\to \Map^{C_{p^k}}(E_H D_n , W)$. Now the composition onto $\Map(E_H D_n , W)$ maps $D_m/H$ to the corresponding constant map to a point in $S(\hat{\xi}^{p^k})^H$. In $W=S(\hat{\xi}^{p^k} + \hat{\xi}^m)=S(\hat{\xi}^{p^k})\ast S( \hat{\xi}^m)$ this is $D_m$-homotopic to the constant map onto $S( \hat{\xi}^m)^H$. 

The obstructions to lifting the homotopy class $u$ starting at $v$ lie in the groups $H^j_{D_m}(E_HD_m; \underline{\pi_{j-1}F(f)})$ by Proposition \ref{eqobs}. From Lemma \ref{fibm} the groups $\underline{\pi_{j-1}F(f)}^e$ are $p$-profinite and hence $H^j_{D_m}(E_H D_m; \underline{\pi_{j-1}F(f)})=0$ if $j\geq 2$ by Corollary \ref{obsgpdn}. Therefore the obstructions are forced to be $0$ if $j\ge 2$. 

For $j=1$ we apply Proposition \ref{cohehdn}. Let $\MM$ denote $ \underline{\pi_0F(f)}$. Then we have a surjective map $A\to H^1_{D_m}(E_H D_m;\MM)$ and an exact sequence 
$$H^0(D_m;\MM^e)\to H^0(H;\MM^e) \to A \to H^1(D_m; \MM^e) \to H^1(H;\MM^e) $$
From Proposition \ref{fibhtpy} we have $\MM^e=0$. Thus $A=0$ and hence, $H^1_{D_m}(E_HD_m;\MM)=0$.  Thus we have
$$ \Map^{D_m}(E_HD_m, \Map^{C_{p^k}}(E_H D_n, W))\neq \varnothing$$
which gives a $D_n$-equivariant map $E_HD_m\times E_HD_n \to W$. Since $E_HD_m \times  E_H D_n \simeq  E_H D_n$  is a $D_n$-homotopy equivalence, we get a $D_n$-map from $ E_H D_{n} \rightarrow W.$ 
\end{proof} 

Proposition \ref{eqEG2} implies the following result as in Theorem \ref{main}. This also allows us to construct $D_n$-equivariant maps from high dimensional representation spheres to lower dimensional ones in the following section. 
\begin{thm}\label{main2}
If $n$  is odd and not a prime power, then there exists a $D_{n}$-equivariant map $E_H D_n \rightarrow  S(\hat{\rho}^{d})$ for all $d.$ 
\end{thm}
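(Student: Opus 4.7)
The plan is to deduce Theorem \ref{main2} directly from Proposition \ref{eqEG2} by post-composing with an equivariant sphere inclusion coming from a containment of representations. Since the genuine analytic work has already been absorbed into Proposition \ref{eqEG2} (namely, the Bredon cohomology computation of Proposition \ref{cohehdn} and Corollary \ref{obsgpdn}, combined with the $p$-profiniteness of the relevant homotopy groups from Lemma \ref{fibm}), all that remains is a short representation-theoretic verification.

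First, I would factor $n = p^k m$ with $\gcd(p,m)=1$, $k \geq 1$ and $m > 1$; such a factorization exists precisely because $n$ is not a prime power. As $n$ is odd, the prime $p$ is odd, and both $p^k$ and $m$ are proper divisors of $n$ with $p^k, m \leq n/3$ (since $n$ is odd and not a prime power forces $n \geq 15$ with $m \geq 3$ and $p^k \geq 3$). Hence both indices lie in $\{1, 2, \ldots, (n-1)/2\}$, the set enumerating the distinct nontrivial real irreducibles $\hat{\xi}^r$ of $D_n$. Furthermore $p^k \neq m$, since $p^k = m$ would force $p \mid m$, contradicting $\gcd(p,m)=1$. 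Therefore $\hat{\xi}^{p^k}$ and $\hat{\xi}^m$ are two distinct summands of $\hat{\rho} = \sum_{r=1}^{(n-1)/2} \hat{\xi}^r$, so $\hat{\xi}^{p^k} \oplus \hat{\xi}^m$ is a genuine $D_n$-subrepresentation of $\hat{\rho}$.

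The subrepresentation containment induces a $D_n$-equivariant embedding of unit spheres
$$S(\hat{\xi}^{p^k} \oplus \hat{\xi}^m) \hookrightarrow S(\hat{\rho}) \hookrightarrow S(\hat{\rho}^d)$$
for every $d \geq 1$. Applying Proposition \ref{eqEG2} yields a $D_n$-equivariant map $E_H D_n \to W = S(\hat{\xi}^{p^k} \oplus \hat{\xi}^m)$, and composing with the above inclusion produces the desired $D_n$-map $E_H D_n \to S(\hat{\rho}^d)$. There is no real obstacle at this stage; the entire content of Theorem \ref{main2} is repackaged by Proposition \ref{eqEG2} together with the elementary identification of $W$ as a subrepresentation of $\hat{\rho}$, exactly paralleling the deduction of Theorem \ref{main} from Proposition \ref{eqEG} in the cyclic case.
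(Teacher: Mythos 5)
Your proposal is correct and follows exactly the route the paper intends: the paper deduces Theorem \ref{main2} from Proposition \ref{eqEG2} with only a one-line remark, and your proof supplies the missing representation-theoretic check that $\hat{\xi}^{p^k}\oplus\hat{\xi}^m$ is a genuine subrepresentation of $\hat{\rho}$ (using oddness of $n$ to avoid the boundary case that required a separate argument in the cyclic Theorem \ref{main}). Nothing further is needed.
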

The following corollary readily follows encoding the existence of the maps in (\ref{tvdn}).
  \begin{cor}\label{tver2}
 If $n$  is odd and not a prime power, then there exists a $D_n$-equivariant map $ (D_n/H)^{\ast k} \to  S(\hat{\rho}^{ d})$ for any positive integers $k$ and $d$.
 \end{cor}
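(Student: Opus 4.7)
The corollary follows almost formally from Theorem \ref{main2}, so my proof plan is very short and I expect no serious obstacle. The essential point is that $(D_n/H)^{\ast k}$ sits naturally inside the universal space $E_H D_n$, which has already been shown to admit an equivariant map to $S(\hat{\rho}^d)$ for every $d \geq 1$.

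First, I would observe that by the very definition $E_H D_n = (D_n/H)^{\ast \infty}$, the finite join $(D_n/H)^{\ast k}$ is a $D_n$-invariant subspace of $E_H D_n$ via the standard inclusion of a finite join into the infinite join (inserting base points in the remaining factors, or simply regarding $(D_n/H)^{\ast k}$ as the $k$-th filtration stage of the infinite join). This inclusion is a $D_n$-equivariant map, so no construction is required at this step.

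Alternatively, and perhaps more in the spirit of the paper, one may argue via universal properties: every point of $(D_n/H)^{\ast k}$ has isotropy equal to an intersection of $D_n$-conjugates of $H$, hence an element of the family of subgroups subconjugate to $H$. Thus $(D_n/H)^{\ast k}$ is a $D_n$-CW complex whose isotropy lies in exactly the family for which $E_H D_n$ is universal, so by the classifying property of universal spaces there exists a $D_n$-equivariant map $(D_n/H)^{\ast k} \to E_H D_n$, unique up to equivariant homotopy.

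Finally, I would compose this equivariant map with the $D_n$-equivariant map $E_H D_n \to S(\hat{\rho}^d)$ furnished by Theorem \ref{main2}. The composition is the desired $D_n$-equivariant map $(D_n/H)^{\ast k} \to S(\hat{\rho}^d)$, valid for all positive integers $k$ and $d$. Since all the hard work has been done in the construction of the map out of $E_H D_n$ (namely the obstruction theory argument using Corollary \ref{obsgpdn} and Lemma \ref{fibm}), there is no remaining difficulty here.
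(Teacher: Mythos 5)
Your proposal is correct and matches the paper's argument: the paper likewise observes that the isotropy groups of points of $(D_n/H)^{\ast k}$ are subconjugate to $H$, deduces the existence of a $D_n$-map $(D_n/H)^{\ast k}\to E_HD_n$, and composes with the map from Theorem \ref{main2}. Your first alternative (the inclusion as a finite filtration stage of the infinite join) is an equally valid shortcut.
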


\begin{proof}
Note that the isotropy groups of points in $(D_n/H)^{\ast k}$ are contained in sub-conjugates of $H$. Thus there exists a $D_n$-equivariant map $(D_n/H)^{\ast k}\to E_HD_n$. The result follows from Theorem \ref{main2}.
\end{proof}

\section{Applications}
In this section we discuss some applications of the existence of equivariant maps proved in Theorems \ref{main} and \ref{main2}. Most of these applications are usually considered alongside the topological Tverberg theorem. 

We start with an easy consequence of Theorem \ref{main}. Recall theorem 2.6 of  \cite{kriz} which asserts the implication $(1)$ $\implies$ $(2)$ $\implies$ $(3)$ of three statements $(1),(2),(3)$. The implication $(2) \implies (3)$ asserts : If $C$ is a finite $C_n$ complex with a $C_n$-equivariant map $C\to S(\bar{\rho}^d)$ then there is an $i<d(n-1)$ such that $H^i(C)\neq 0$. It was found to be erroneous in \cite{krizcorr} and it is already contradicted by \cite{Oz87}. From Theorem \ref{main} we find that if $n$ is not a prime power, for every $k$ and $d$ there is a finite $C_n$-complex $C$ with $H^i(C)=0$ for $i\leq k$ and there exists a $C_n$-equivariant map $C\to S(\bar{\rho}^d)$. For example we may choose $C$ to be an arbitrarily large skeleton of $EC_n$. This rules out the existence of weaker versions of theorem 2.6 of \cite{kriz}. 

Other arguments that are commonly used in the proof of the topological Tverberg theorem in the prime power case are generalizations of the Borsuk-Ulam theorem. There are many definitions of Borsuk-Ulam theorems in the literature. One category of definitions are of Borsuk-Ulam properties of groups (see \cite{mar90}) which encodes whether equivariant maps between fixed point free representation spheres implies non-zero degree. The other approach is to define Borsuk-Ulam properties of representations for a fixed group $G$.  In this respect we find two different definitions \\
(1) A fixed point free $C_n$-representation $V$ of dimension $N$ has Borsuk-Ulam property if any equivariant map from $C_n^{\ast N+1}\to V$ has a zero. (\cite{sar})\\
(2) A $G$-representation $V$ is said to have the Borsuk-Ulam property if the existence of a $G$-map from $S(W) \to S(V)$ implies $\dim(W)\leq \dim(V)$. (\cite{iz-ma})

We apply the techniques in section \ref{tv} to the above situation.  
\begin{theorem}\label{borcycsqfree}
Suppose that $n$ is square free. Let $V$ be a fixed point free representation of $C_n$. Then either $V$ has the Borsuk-Ulam properties ((1) and (2)) as above or there exists a $C_n$-equivariant map $EC_n\to S(V)$.  
\end{theorem}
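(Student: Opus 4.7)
I plan to split the argument into two cases based on whether some prime $p \mid n$ has $V^{C_p} = 0$. In the first case ($V^{C_p} = 0$ for some prime $p \mid n$), I verify both Borsuk-Ulam properties (1) and (2) by restricting the $C_n$-action to the subgroup $C_p$, under which $V$ is fixed-point-free. For property (1), any hypothetical $C_n$-equivariant map $C_n^{\ast N+1} \to S(V)$ with $N = \dim V$ restricts to a $C_p$-equivariant map from a free $C_p$-complex of dimension $N$ to the $C_p$-sphere $S(V)$ of dimension $N-1$; this is ruled out by the classical Borsuk-Ulam theorem for cyclic groups of prime order (via the non-vanishing of the Euler class of $V$ in $H^{\dim V}(BC_p; \mathbb{F}_p)$, or equivalently by Smith theory). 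For property (2), a hypothetical $C_n$-equivariant $g\colon S(W) \to S(V)$ forces $W^{C_p} = 0$ by taking $C_p$-fixed points, so both $W$ and $V$ become fixed-point-free $C_p$-representations, and the same classical theorem yields $\dim W \leq \dim V$.

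In the second case ($V^{C_p} \neq 0$ for every prime $p \mid n$), I construct $EC_n \to S(V)$ by induction on the number $k$ of distinct prime divisors of $n = p_1 \cdots p_k$, proving in fact the following stronger statement: \emph{for any square-free $n'$ and any $C_{n'}$-representation $V'$ with $V'^{C_p} \neq 0$ for every prime $p \mid n'$, there exists a $C_{n'}$-equivariant map $EC_{n'} \to S(V')$}. The base $k=1$ is immediate since $S(V^{C_{p_1}}) \subset S(V)$ is non-empty. For the inductive step, write $n = m \cdot p$ with $p = p_k$ and $m = p_1 \cdots p_{k-1}$; as $\gcd(m, p) = 1$, the restriction $V|_{C_m}$ still satisfies $V^{C_{p_i}} \neq 0$ for $i < k$, so by induction there is a $C_m$-equivariant map $\phi\colon EC_m \to S(V)$. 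Using $C_n = C_m \times C_p$ and the $C_n$-equivalence $EC_n \simeq EC_m \times EC_p$, extending $\phi$ to a $C_n$-equivariant map amounts, by exponential correspondence, to producing a $C_p$-equivariant map $EC_p \to \Map^{C_m}(EC_m, S(V))$.

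To construct this lift I apply Proposition \ref{eqobs} to the lifting diagram of $C_p$-spaces
$$\xymatrix{ {\Map^{C_m}(EC_m,S(V))_\phi} \ar[r]^i & {\Map(EC_m,S(V)) \simeq S(V)} \\ & EC_p \ar[u]^u \ar@{-->}[lu] }$$
where $u$ is the $C_p$-equivariant constant map onto a point of $S(V)^{C_p} \neq \varnothing$, chosen in the path component of the image of $\phi$ (possible since $S(V)$ is simply connected for $\dim V \geq 3$). The obstructions live in $H^{j+1}(BC_p; \pi_j F(i))$, and by Lemma \ref{fibm} the homotopy groups $\pi_j F(i)$ are $m$-profinite; these obstruction groups therefore vanish because $\gcd(p, m) = 1$, yielding the lift. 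The main obstacle I anticipate is that Lemma \ref{fibm} as stated requires $S(V)^{C_m} \neq \varnothing$, which can fail (for instance when $V = \xi^2 \oplus \xi^3 \oplus \xi^5$ with $n = 30$, since then $V^{C_6} = V^{C_{10}} = V^{C_{15}} = 0$); however, the crucial input to Dror's argument is the non-emptiness of $\Map^{C_m}(EC_m, S(V))$, which is supplied by the inductively constructed $\phi$, and the conclusion of $m$-profiniteness (stemming from the $m$-torsion of $H^{\ast > 0}(C_m; -)$) should persist in this generality.
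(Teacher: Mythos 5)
Your dichotomy (``some prime $p\mid n$ has $V^{C_p}=0$'' versus ``$V^{C_p}\neq 0$ for all $p\mid n$'') is equivalent to the paper's dichotomy on whether $n$ divides the Euler class $e(V)$, and your handling of the first case is correct. The problem is in the inductive step of the second case, and you have already put your finger on it: you split off a \emph{single} prime, $n = m\cdot p$, and want to apply Lemma~\ref{fibm} to the fibre of $\Map^{C_m}(EC_m,S(V))_\phi\to\Map(EC_m,S(V))$, but this requires $S(V)^{C_m}\neq\varnothing$ (and that every $g\in C_m$ act on $S(V)$ by a degree-$1$ map), and neither hypothesis is guaranteed -- your own example $V=\xi^2\oplus\xi^3\oplus\xi^5$ with $n=30$, $m=6$ shows $S(V)^{C_6}=\varnothing$. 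Simply asserting that the $m$-profiniteness conclusion ``should persist'' is not a proof; Lemma~\ref{fibm} is quoted from \cite{Dror} with those hypotheses, and you would need to prove a genuinely stronger statement to proceed this way.

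The paper avoids this by choosing the decomposition $n=l\cdot m$ \emph{adapted to} $V$: since $n\mid e(V)$ but $V$ is fixed-point-free, there must be a summand $\xi^r$ of $V$ with $l:=\gcd(r,n)=p_1\cdots p_s$ a proper nontrivial divisor of $n$, so $\xi^r$ is $C_l$-fixed, $S(V)^{C_l}\neq\varnothing$, and Lemma~\ref{fibm} applies \emph{on the nose} to the map $\Map^{C_l}(EC_n,S(V))\to\Map(EC_n,S(V))$ (the lifting is then done over $EC_m$ with $m=n/l$, and the induction hypothesis is applied to the complementary summand $V_1$ restricted to $C_m$, which again has vanishing Euler class). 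This is the key idea you are missing: the induction is not ``one prime at a time'' but is steered by the structure of $V$ so that the fixed-point hypothesis of the fibre lemma is always satisfied.

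A second, smaller omission: when $n$ is even (so $n=2l$, $l$ odd square-free), $V$ may contain the real one-dimensional sign representation $\sigma$, which is not of the form $\xi^i$. Your argument implicitly assumes $V=\oplus_i m_i\xi^i$. In that case $C_2$ acts on $S(V)$ by a degree-$(-1)$ map, so the ``degree-$1$'' hypothesis of Lemma~\ref{fibm} fails for any group containing $C_2$. The paper treats this separately, computing $e(V)$ in $H^*(C_n;\hat{\Z})$ with the orientation local system, and running the lift over $EC_2$ with the fibre controlled by $C_l$ (which does act by degree $1$ since $\sigma|_{C_l}$ is trivial). You need a similar case analysis.
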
  

\begin{proof}
If $n$ is prime, a fixed point free representation is necessarily free. Such representations have the Borsuk-Ulam property (\cite{dold83},\cite{liu83}). We restrict our attention to $n$ square free and not prime so that $n=p_1\cdots p_k$ for $k>1$. 

First  consider the case where $V$ is the realization of a complex representation. Then $V = \oplus_i m_i\xi^i$. From \cite{sar} and \cite{iz-ma} we have that $V$ has the Borsuk-Ulam property if the Euler class $e(V)=\prod_i i^{m_i}$ is not divisible by $n$. Otherwise $n\mid \prod_i i^{m_i}$ which happens if and only if for every $j\leq k$, there is an $i$ with $m_i\neq 0$ so that  $p_j \mid i$. If all the $p_j$ divide the same $i$ then $V$ contains the representation $\xi^n = \epsilon_\C$ the trivial representation and hence is not fixed point free. Therefore, $V$ contains a summand $\xi^r + V_1$ so that $p_1\cdots p_s \mid r$ and the restriction of $V_1$ to the group $C_{p_{s+1}\cdots p_k}$ has Euler class $0$ (after a possible rearrangement of $p_j$).

We may proceed by induction on $k$ the number of distinct primes dividing $n$. Write $n=l\cdot m$ where $l= p_1\cdots p_s$ and $m= p_{s+1}\cdots p_k$ so that $C_n\cong C_l\times C_m$. From the induction hypothesis, there exists an equivariant map $u:EC_m \to S(V_1)$. We may now proceed as in Proposition \ref{eqEG}. Since $\xi^r$ is a $C_l$-fixed it follows that $\Map^{C_l}(EC_n, S(V))$ is non-empty. Also $S(V)$ satisfies the conditions of Lemma \ref{fibm} as a $C_l$-space. Now we may consider the appropriate lifting diagram  
$$\xymatrix{\Map^{C_l}(EC_n , S(V))\ar[r]  &  \Map(EC_n , S(V)) \\
                                                                            &  EC_m \ar[u]^u \ar@{-->}[lu] }$$
whose obstructions lie in $H^*(C_m;F)$ where $F$ is $l$-profinite by Lemma \ref{fibm} and hence are $0$.  Thus we obtain a $C_n$ equivariant map $EC_n \to S(V)$ as required. 

It remains to consider the case where $V$ has a one-dimensional real irreducible summand. In this case $n$ is even $=2l$ and $V= \sigma + V_1$ where $V_1$ is the realization of a complex representation. If $e(V_1)=0$ we are done by the above argument. 

We have $e(V) \in H^{\dim(V_1)+1}(C_{2l};\hat{\Z})$. The local coefficient $\hat{\Z}$ has the action corresponding to orientability of the associated representation; thus, it is a tensor product of the trivial coefficient $\Z$ over $C_l$ and the usual action of $\pm 1$ over $C_2$. The class $e(V)$ is the tensor product of $e(V_1)\in H^{\dim(V_1)}(C_l;\Z)$ and the class $e(\sigma)\in H^1(C_2; \hat{\Z})$. The class $e(\sigma)$ can be identified as the characteristic class of the canonical line bundle over $\R P^\infty$ which is the unique non-zero class in $H^1(\R P^\infty; \hat{\Z})\cong \Z/2$. Therefore $e(V)=0$ if and only if $e(V_1) \equiv 0 (mod~2)$. 

Let $V=V_1 \oplus \sigma$ such that $e(V)=0$ so that $V$ does not have the Borsuk-Ulam property. Then $e(V_1)$ is even which implies $V_1^{C_2} \neq \varnothing$. Note that $\sigma$ is fixed by $C_l$ so that   $\Map^{C_l}(EC_n, S(V))$ is non-empty. As above consider the lifting diagram  
$$\xymatrix{\Map^{C_l}(EC_n , S(V))\ar[r]  &  \Map(EC_n , S(V)) \\
                                                                            &  EC_2 \ar[u]^u \ar@{-->}[lu] }$$
The map $u$ is obtained as $\varnothing \neq S(V_1)^{C_2} \subset S(V)^{C_2}$. The obstructions are $0$ in the same way as above.  Thus there exist $C_n$-equivariant maps $EC_n \to S(V)$ as required. 
\end{proof}

The Theorem \ref{borcycsqfree} invites the following definition
 \begin{defn} A real representation $V$ of a finite group $G$ is said to have the anti-Borsuk-Ulam property  if for every $r$  there is a representation $U$ with $dim(U) > r$ and a continuous $G$- equivariant map from $S(U)$ to $S(V)$.
 \end{defn}

As $S(r\xi)$ is free, there is a map $S(r\xi) \to EC_n$. From Theorem \ref{borcycsqfree} we conclude that if $n$ is square free, either fixed point free representations have the Borsuk-Ulam property or the anti-Borsuk-Ulam property. However, this does not hold if $n$ is not square free as demonstrated in the following example. 

\begin{exam}
Let $G=C_{p^2}$ and $V=2\xi^p$ for a prime $p$. Then $e(V)=0$ so that $V$ does not have the Borsuk-Ulam property. However $G$ is a $p$-group and $V^G = (0)$. Thus by Sullivan's conjecture (\cite{May},\cite{Mill}) the homotopy fixed points of $S(V)$ are empty. Hence there is no $C_{p^2}$-equivariant map $EC_{p^2} \to S(V)$. Thus we cannot deduce the anti-Borsuk-Ulam property for $V$ in the above manner. 
\end{exam}

One can easily generate similar examples to the above for $G= C_{p^k}$. We observe below that these are the only kind of examples in which the dichotomy of the Borsuk-Ulam property and the anti-Borsuk-Ulam property can fail for $G=C_n$. Note that if for every prime power $p^k$ dividing $n$, $V^{C_{p^k}}\neq (0)$, then $e(V)=0$. From Sullivan's conjecture we know that if $V^{C_{p^k}}=(0)$ for $C_{p^k}\subset C_n$ there cannot be a $C_n$-equivariant map $EC_n \to S(V)$.   
\begin{theorem}\label{borcyc}
Let $V$ be a fixed point free $C_n$-representation such that for each prime power $p^k$ dividing $n$, $V^{C_{p^k}} \neq (0)$. Then $V$ has the anti-Borsuk-Ulam property. 
\end{theorem}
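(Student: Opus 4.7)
The plan is to reduce, exactly as in the proof of Theorem \ref{borcycsqfree}, to constructing a $C_n$-equivariant map $\Phi\colon EC_n\to S(V)$. Since for every integer $r\ge 1$ the representation sphere $S(r\xi)$ is a free $C_n$-space, there is a $C_n$-equivariant map $S(r\xi)\to EC_n$, and composing with $\Phi$ yields $C_n$-equivariant maps $S(r\xi)\to S(V)$ with $\dim(r\xi)=2r$ arbitrarily large, establishing the anti-Borsuk-Ulam property. I would prove the existence of such a $\Phi$ by induction on the number $s$ of distinct primes dividing $n$. The case $s\le 1$ is vacuous: applying the hypothesis with $p^k=n$ forces $V^{C_n}\neq 0$, contradicting fixed point freeness. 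So one may assume $s\ge 2$ and that the statement has been proved for all smaller cyclic groups.

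For the inductive step, pick an odd prime factor $p$ of $n$ (which exists since at most one prime divisor of $n$ is $2$), set $l=p^{a_p}$ and $m=n/l$, giving $C_n\cong C_l\times C_m$ with $\gcd(l,m)=1$. The oddness of $l$ is crucial: every element of $C_l$ acts on $V$ with determinant $+1$ (its eigenvalues come in complex-conjugate pairs with product $1$, and there are no $-1$ eigenvalues in an odd-order orthogonal action), hence acts on the simply connected sphere $S(V)$ by a degree-one map, which is homotopic to the identity. Combined with $V^{C_l}\neq 0$, this places $S(V)$ into the setting of Lemma \ref{fibm} as a $C_l$-space.

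Next I would produce a $C_m$-equivariant map $\psi\colon EC_n\to S(V)$: if $V^{C_m}\neq 0$, take $\psi$ constant at a point of $S(V^{C_m})$; otherwise $V$ is a fixed-point-free $C_m$-representation inheriting the hypothesis, and in this branch $s-1\ge 2$ (the case $s=2$ is impossible, since then $m$ would be a prime power and the inherited hypothesis would require $V^{C_m}\neq 0$, contradicting $V^{C_m}=0$), so the induction hypothesis supplies a $C_m$-equivariant map $\tilde\phi\colon EC_m\to S(V)$ which we precompose with the $C_m$-equivariant projection $EC_n\simeq EC_l\times EC_m\to EC_m$. In either case $\psi$ is a $C_m$-fixed point of $\Map(EC_n,S(V))$. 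Now form the lifting diagram of $C_m$-spaces
$$\xymatrix{\Map^{C_l}(EC_n,S(V))\ar[r]^-{f} & \Map(EC_n,S(V)) \\ & EC_m\ar[u]^-{u}\ar@{-->}[lu]}$$
with $u$ the constant $C_m$-equivariant map at $\psi$. The upper-left space is nonempty via the $C_l$-equivariant constant map into $S(V^{C_l})$, and by Proposition \ref{eqobs} combined with Lemma \ref{fibm} applied to $S(V)$ as a $C_l$-space, the successive obstructions to lifting $u$ lie in $H^k(BC_m;F_{k-1})$ with each $F_{k-1}$ an $l$-profinite coefficient system; these vanish for $k\ge 1$ because $\gcd(l,m)=1$. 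The exponential adjunction then converts the lift $\tilde u\colon EC_m\to\Map^{C_l}(EC_n,S(V))$ into a $C_n$-equivariant map $EC_m\times EC_n\to S(V)$, and a $C_n$-equivalence $EC_n\simeq EC_m\times EC_n$ yields the required $\Phi$.

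The main obstacle will be handling the sub-case $V^{C_m}=0$, where one cannot anchor the lifting argument on a $C_m$-fixed constant; this is what forces the induction on the number of prime divisors of $n$, and the key bookkeeping is the observation that this degenerate sub-case cannot arise when $s=2$, so the inductive hypothesis is only ever invoked on strictly smaller groups that themselves have at least two prime factors. A secondary technical point is the insistence on choosing an odd prime factor of $n$, which is exactly what makes Lemma \ref{fibm} applicable to the pair $(S(V),C_l)$.
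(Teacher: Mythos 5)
Your proposal is correct and follows the paper's own inductive strategy: write $C_n\cong C_{p^k}\times C_m$ with $p$ odd (so that Lemma \ref{fibm} applies to $S(V)$ as a $C_{p^k}$-space), anchor with a $C_m$-equivariant map to $S(V)$ obtained either from a fixed point or by the inductive hypothesis, and kill the obstructions to lifting through $\Map^{C_{p^k}}(EC_n,S(V))\to\Map(EC_n,S(V))$ using $\gcd(p,m)=1$. Your explicit split on whether $V^{C_m}=0$, together with the observation that the degenerate branch cannot occur when $s=2$, is tidier bookkeeping of what the paper leaves implicit (the paper's stated base case $n=p^k$ is not literally consistent with the fixed-point-free hypothesis, so it is really running the induction on a mildly weaker statement), but the underlying argument is the same.
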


\begin{proof}
We proceed by induction on the number of primes dividing $n$. The starting point is at a prime power $n=p^k$  whence the condition implies that $S(V)$ has a fixed point. In general,  write $n=p^k\cdot m$ so that the number of primes dividing $m$ is less than $n$. We have a $C_m$-equivariant map $u:EC_m \to S(V)$ by induction hypothesis and a $C_{p^k}$-equivariant map $EC_n \to S(V)$. We may assume $p\neq 2$ so that the group elements in $C_{p^k}$ act by degree $1$ maps. We may now proceed as in Proposition \ref{eqEG} via the lifting diagram  
$$\xymatrix{\Map^{C_{p^k}}(EC_n , S(V))\ar[r]  &  \Map(EC_n , S(V)) \\
                                                                            &  EC_m \ar[u]^u \ar@{-->}[lu] }$$
whose obstructions are $0$.  Thus we obtain a $C_n$-equivariant map $EC_n \to S(V)$ as required. 
\end{proof}

We now consider the case of the dihedral group $D_n$ for $n$ odd. Again we have similar results in the square free case and the general non prime power case.
\begin{theorem}\label{bordnsqfree}
Suppose that $n$ is odd and square free and let $V$ be a fixed point free $D_n$-representation not containing $\sigma$. Then either $V$ has the Borsuk-Ulam property or the anti-Borsuk-Ulam property. 
\end{theorem}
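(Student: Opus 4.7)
The plan is to mirror the proof of Theorem~\ref{borcycsqfree}, adapted to the dihedral group via the machinery of Section~\ref{tv2}. Since $V$ is fixed-point free and contains no $\sigma$, write $V = \bigoplus_{i=1}^{(n-1)/2} m_i \hat{\xi}^i$, and distinguish cases according to whether the equivariant Euler class $e(V)$ vanishes in the appropriate (twisted) Bredon cohomology of $BD_n$. If $e(V) \neq 0$, the criteria of \cite{sar} and \cite{iz-ma} directly imply the Borsuk-Ulam property in both senses (1) and (2), and we are done.

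If instead $e(V) = 0$, I aim to establish the anti-Borsuk-Ulam property by constructing an equivariant map $E_HD_n \to S(V)$. The hypothesis that $\sigma$ does not appear in $V$ guarantees that every isotropy subgroup of $S(V)$ is contained in a sub-conjugate of $H$, so the existence of such a map is sufficient: for any $r$, $S(r\hat\xi)$ likewise has isotropy confined to sub-conjugates of $H$ (the rotations in $C_n$ act freely on $\hat\xi$), hence there is an equivariant map $S(r\hat\xi) \to E_HD_n$, and composing with $E_HD_n \to S(V)$ yields a $D_n$-equivariant map $S(r\hat\xi) \to S(V)$ of arbitrarily large domain dimension.

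To build $E_HD_n \to S(V)$, I induct on the number of primes dividing $n$, replicating Proposition~\ref{eqEG2}. Writing $n = p\cdot m$ with $p$ an odd prime, $p\nmid m$, and $D_n = C_p \rtimes D_m$, the vanishing of $e(V)$ combined with the identification of $C_p$-fixed summands (namely $\hat\xi^i$ with $p\mid i$) and the multiplicativity of Euler classes forces a splitting $V = V_0 \oplus V_1$ with $V_0^{C_p} \neq 0$ and with the $D_m$-Euler class of $V_1$ still zero. By the induction hypothesis applied over $D_m$, there is a $D_m$-equivariant map $u : E_HD_m \to S(V_1) \subset S(V)$. A point of $S(V_0)^{C_p}$ provides the required $C_p$-equivariant constant map $E_HD_n\to S(V)$, and one lifts $u$ through the diagram
$$\xymatrix{\Map^{C_p}(E_HD_n,S(V))\ar[r]^-{i} & \Map(E_HD_n,S(V)) \\ & E_HD_m \ar[u]^u \ar@{-->}[lu] }$$
using equivariant obstruction theory. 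The obstructions live in $H^j_{D_m}(E_HD_m;\underline{\pi_{j-1}F(i)})$ with $\underline{\pi_{j-1}F(i)}^e$ being $p$-profinite by Lemma~\ref{fibm}, and so vanish by Corollary~\ref{obsgpdn}, with the $j=1$ case handled exactly as in Proposition~\ref{eqEG2}. The exponential correspondence then delivers the required $D_n$-equivariant map $E_HD_n\to S(V)$.

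The main obstacle is the Euler-class bookkeeping that underwrites the decomposition $V = V_0 \oplus V_1$. One must pin down $e(V)$ in the correct (twisted, since each $\hat\xi^i$ is non-orientable under the reflections) Bredon cohomology group of $BD_n$, express it explicitly in terms of the multiplicities $m_i$ and the primes dividing $n$, and verify that $e(V)=0$ combinatorially forces, for some prime $p\mid n$, the existence of a summand $\hat\xi^i$ with $p\mid i$ whose removal leaves a $V_1$ with vanishing $D_m$-Euler class. Once this combinatorial consequence is secured, the rest of the argument is a direct instance of the obstruction-theoretic machinery already developed in Section~\ref{tv2}.
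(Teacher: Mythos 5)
Your overall architecture is the same as the paper's: split on whether $V$ has the Borsuk-Ulam property; if not, build a $D_n$-map $E_HD_n\to S(V)$ by the lifting diagram of Proposition~\ref{eqEG2}, using Lemma~\ref{fibm} and Corollary~\ref{obsgpdn} for the obstruction vanishing and the exponential correspondence to assemble the map; finally use $S(r\hat\xi)\to E_HD_n$ to conclude the anti-Borsuk-Ulam property. That skeleton matches.

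However, the step you flag as ``the main obstacle'' is precisely the step the paper does \emph{not} need, and your route to it is the wrong one. You propose computing $e(V)$ in the twisted Bredon cohomology of $BD_n$ and extracting the decomposition $V=V_0\oplus V_1$ combinatorially from that. The paper sidesteps this entirely by \emph{restricting to $C_n$}: a $D_n$-equivariant map restricts to a $C_n$-equivariant one, so if the $C_n$-restriction of $V$ has the Borsuk-Ulam property then so does $V$; and the non-vanishing test for $e(\mathrm{res}_{C_n}V)$ is just the untwisted computation $\prod_i i^{m_i}\not\equiv 0\ (\mathrm{mod}\ n)$ already carried out in Theorem~\ref{borcycsqfree}. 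When that product \emph{is} divisible by $n$, the argument in the proof of Theorem~\ref{borcycsqfree} — applied to $\mathrm{res}_{C_n}V$ — already produces the summand $\hat\xi^r+V_1$ with $p_1\cdots p_s\mid r$ and with $\mathrm{res}_{C_{p_{s+1}\cdots p_k}}V_1$ of zero Euler class; there is no new dihedral Euler-class calculation to do. So the ``gap'' you leave is not a genuine gap in the theorem, but an artifact of choosing a harder invariant than necessary.

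A second, more concrete issue: your inductive split $n=p\cdot m$ with a single prime $p$ does not obviously produce the required decomposition. The output of the cyclic argument is a summand $\hat\xi^r$ with $p_1\cdots p_s\mid r$ for \emph{some} set of primes $\{p_1,\dots,p_s\}$ (possibly $s>1$), together with $V_1$ whose restriction to $C_{p_{s+1}\cdots p_k}$ has vanishing Euler class. There is no guarantee one can choose $s=1$ and still have $V_1$ restrict with zero Euler class to $C_{n/p}$. The paper accordingly writes $n=l\cdot m$ with $l=p_1\cdots p_s$ and $D_n\cong C_l\rtimes D_m$, which is the split that actually falls out of the combinatorics; your version would need additional justification or should be changed to match. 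Also note the $n$ prime base case: the paper handles it by restriction to $C_n$ and Theorem~\ref{borcycsqfree} (a fixed-point-free $D_n$-representation without $\sigma$ restricts to a free $C_n$-representation), which you should state rather than fold into the general case.
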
  

\begin{proof}
If $n$ is prime,  $V$ is a sum of representations $\hat{\xi}^i$. Then the restriction of $V$ to $C_n$ has the Borsuk-Ulam property from Theorem \ref{borcycsqfree} implying $V$ also carries the property. 
Now consider $n$ square free and not prime so that $n=p_1\cdots p_k$ for $k>1$. 

We have $V = \oplus_i m_i\hat{\xi}^i$. By restriction to $C_n$ observe $V$ has the Borsuk-Ulam property if $\prod_i i^{m_i}$ is not divisible by $n$. From the proof of Theorem \ref{borcycsqfree}, $V$ contains a summand $\hat{\xi}^r + V_1$ so that $p_1\cdots p_s \mid r$ and the restriction of $V_1$ to the group $C_{p_{s+1}\cdots p_k}$ has Euler class $0$. We proceed by induction on $k$, the number of prime factors, to prove that there is a map $E_HD_n \to S(V)$. 

Write $n=l\cdot m$ where $l= p_1\cdots p_s$ and $m= p_{s+1}\cdots p_k$ so that $D_n\cong C_l\rtimes D_m$. From the induction hypothesis, there exists an equivariant map $u:E_HD_m \to S(V_1)$. Now proceed as in Proposition \ref{eqEG2}. Since $\hat{\xi}^r$ is a $C_l$-fixed it follows that $\Map^{C_l}(E_HD_n, S(V))$ is non-empty. Also $S(V)$ satisfies the conditions of Lemma \ref{fibm} as a $C_l$-space. Now we may consider the appropriate lifting diagram  
$$\xymatrix{\Map^{C_l}(E_HD_n , S(V))\ar[r]  &  \Map(E_HD_n , S(V)) \\
                                                                            &  E_HD_m \ar[u]^u \ar@{-->}[lu] }$$
and check that the obstructions are $0$ as in Proposition \ref{eqEG2}. 

For the remaining note that there is a $D_n$-equivariant map $S(k\hat{\xi}) \to E_HD_n$ for any $k$. Thus we obtain maps from representations of arbitrarily large dimension.
\end{proof}

In the same manner, we obtain an analogue of Theorem \ref{borcyc}.
\begin{theorem}\label{bordn}
Let $V$ be a fixed point free $D_n$-representation not containing $\sigma$ such that for each prime power $p^k$ dividing $n$, $V^{C_{p^k}} \neq (0)$. Then $V$ has the anti-Borsuk-Ulam property. 
\end{theorem}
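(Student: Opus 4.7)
The plan is to establish the anti-Borsuk-Ulam property by constructing a $D_n$-equivariant map $E_HD_n\to S(V)$: once such a map exists, precomposing with the $D_n$-equivariant map $S(k\hat{\xi})\to E_HD_n$ (available because the isotropy groups of $S(k\hat{\xi})$ lie in subconjugates of $H$) supplies maps from representation spheres of arbitrarily large dimension into $S(V)$. Since $V$ is fixed-point-free without a $\sigma$-summand, we may write $V=\bigoplus_r m_r\hat{\xi}^r$ with $r\in\{1,\ldots,(n-1)/2\}$; the hypothesis $V^{C_{p^k}}\neq 0$ for every prime power $p^k\mid n$ then forces $n$ to have at least two distinct prime factors, since otherwise some $r$ would have to be a multiple of $n$. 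The construction of $E_HD_n\to S(V)$ proceeds by induction on the number of primes dividing $n$, starting at two.

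For the base case $n=p^aq^b$, set $m=q^b$ so that $D_n=C_{p^a}\rtimes D_m$. The hypothesis produces a non-zero $V_1\subset V^{C_{p^a}}$ and, from $V^{C_{q^b}}\neq 0$, a summand $\hat{\xi}^r$ of $V$ with $q^b\mid r$; since $\hat{\xi}^r|_{D_m}=1_{D_m}+\sigma_{D_m}$, this yields a non-zero $V_2\subset V^{D_m}$. The proof of Proposition \ref{eqEG2} now applies with $V_1\oplus V_2$ replacing $\hat{\xi}^{p^k}\oplus\hat{\xi}^m$ and $S(V_1\oplus V_2)\hookrightarrow S(V)$ replacing $W$, since the structural inputs to that argument---$V_1$ being $C_{p^a}$-fixed, $V_2^{D_m}\neq 0$, and the $H$-fixed subspace $V_1^H\oplus V_2^H$ having dimension at least $2$---are in place by construction.

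For the inductive step, write $n=p^k\cdot m$ with $p^k$ the full $p$-part of $n$. The key dichotomy is whether some summand $\hat{\xi}^r$ of $V$ satisfies $m\mid r$. If yes, then $\hat{\xi}^r|_{D_m}=1_{D_m}+\sigma_{D_m}$ gives $V^{D_m}\neq 0$ and a constant map $E_HD_m\to S(V)^{D_m}$ provides a $D_m$-equivariant map $E_HD_m\to S(V)$. If no, then $V|_{D_m}$ contains neither a trivial nor a $\sigma_{D_m}$-summand, so it is itself fixed-point-free and $\sigma$-free as a $D_m$-representation; the conditions $V^{C_{q^l}}\neq 0$ for prime powers $q^l\mid m$ are inherited from the $D_n$-hypothesis, so the induction hypothesis applied to $V|_{D_m}$ yields a $D_m$-equivariant map $E_HD_m\to S(V)$. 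Composing either map with the canonical $D_m$-equivariant map $E_HD_n\to E_HD_m$ (which exists because $E_HD_n$ has $D_m$-isotropy in subconjugates of $H$) produces a $D_m$-equivariant map $\psi:E_HD_n\to S(V)$.

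Using $\psi$ as the constant value of a $D_m$-equivariant map $u:E_HD_m\to\Map(E_HD_n,S(V))$, I set up the lifting diagram
$$\xymatrix{\Map^{C_{p^k}}(E_HD_n,S(V))\ar[r]^i & \Map(E_HD_n,S(V)) \\ & E_HD_m\ar[u]^u\ar@{-->}[lu]}$$
of $D_m$-spaces and apply Proposition \ref{eqobs}. Because $n$ is odd, $C_{p^k}$ acts on $S(V)$ by degree-one rotations, so Lemma \ref{fibm} applies and $\underline{\pi_*F(i)}$ takes $p$-profinite values at $D_m/e$; since $p\nmid 2m$, Corollary \ref{obsgpdn} forces the vanishing of all obstructions in $H^j_{D_m}(E_HD_m;\underline{\pi_{j-1}F(i)})$ for $j\geq 2$, while the $j=1$ case is disposed of via Proposition \ref{cohehdn} together with $\underline{\pi_0F(i)}^e=0$ from Proposition \ref{fibhtpy}, exactly as in Proposition \ref{eqEG2}. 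The $0$-skeleton lift is provided by the $D_{p^k}$-equivariant constant $c_v$ for $v\in V^{D_{p^k}}$ (non-zero because each $\hat{\xi}^r$ summand of $V^{C_{p^k}}$ contributes an $H$-fixed line), and matches $u|_{D_m/H}$ up to $D_m$-homotopy via a path in $S(V)^H=S(V^H)$, which is path-connected because $\dim V^H\geq 2$. The resulting $D_m$-equivariant map $E_HD_m\to\Map^{C_{p^k}}(E_HD_n,S(V))$ adjoints to a $D_n$-equivariant map $E_HD_n\times E_HD_m\to S(V)$, and the $D_n$-equivalence $E_HD_n\times E_HD_m\simeq E_HD_n$ produces the required $E_HD_n\to S(V)$. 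The main obstacle will be the $0$- and $1$-skeleton obstruction analysis, especially verifying $D_m$-equivariant compatibility of the non-constant $\psi$ with the constant $c_v$ at the $H$-fixed level; the path-connectedness of $S(V)^H$ is what makes this feasible.
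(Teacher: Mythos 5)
Your proof follows the same overall framework as the paper's very terse argument—induction on the number of primes dividing $n$, combined with the obstruction-theoretic lifting exactly as in Proposition \ref{eqEG2}—but you do the bookkeeping that the paper glosses over, and this bookkeeping turns out to be genuinely necessary. The paper's proof invokes an ``induction hypothesis'' to produce the $D_m$-map $E_HD_m \to S(V)$ without addressing the fact that $V|_{D_m}$ need not satisfy the theorem's hypotheses (it may acquire trivial or $\sigma$-summands under restriction), and the paper's stated base case at a prime power is actually vacuous: the hypotheses (fixed-point-free, $\sigma$-free, $V^{C_{p^k}}\neq 0$) are mutually contradictory when $n=p^k$. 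Your observation that the hypotheses force $n$ to have at least two distinct prime factors correctly sets the base case, and your dichotomy on whether some $\hat{\xi}^r$-summand satisfies $m\mid r$ is precisely the missing step: when $V^{C_m}\neq 0$ one gets $V^{D_m}\neq 0$ and a constant $D_m$-map suffices (and this must happen when $m$ is itself a prime power, rescuing the induction there), while when $V^{C_m}=0$ the restriction $V|_{D_m}$ genuinely inherits the theorem's hypotheses. The remainder—degree-one action for odd $p$, $p$-profiniteness of $\underline{\pi_*F(i)}^e$, vanishing via Corollary \ref{obsgpdn}, the $j=1$ step through Proposition \ref{cohehdn} with $\underline{\pi_0F(i)}^e=0$, path-connectedness of $S(V^H)$ from $\dim V^H\geq 2$, and the $D_n$-equivalence $E_HD_m\times E_HD_n\simeq E_HD_n$—is the paper's Proposition \ref{eqEG2} blueprint applied correctly. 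So this is the same route, executed with care where the paper is informal.
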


\begin{proof}
We proceed by induction on the number of primes dividing $n$. The starting point is at a prime power $n=p^k$  whence the condition implies that $S(V)$ has a fixed point. In general,  write $n=p^k\cdot m$ so that the number of primes dividing $m$ is less than $n$. We have a $D_m$-equivariant map $u:E_HD_m \to S(V)$ by induction hypothesis and a $C_{p^k}$-equivariant map $E_HD_n \to S(V)$. We may assume $p\neq 2$ so that the group elements in $C_{p^k}$ act by degree $1$ maps. We may now proceed as in Proposition \ref{eqEG2} and lift the map from $E_HD_m \to \Map(E_H D_n,S(V))$ to $E_HD_m \to \Map^{C_{p^k}}(E_HD_n,S(V))$. This gives us a $D_n$-map $E_HD_n\to S(V)$.
\end{proof}

\mbox{ }\\

\end{document}